\title{$C^*$-algebras and direct integral decomposition \\for Lie supergroups 
}
\author{Karl-Hermann Neeb\footnote{Department Mathematik, Friedrich-Alexander Universit\"{a}t Erlangen--N\"{u}rnberg, \texttt{neeb@mi.uni-erlangen.de}} , 
Hadi Salmasian\footnote{Department of Mathematics and Statistics, University of Ottawa,
\texttt{hsalmasi@uottawa.ca}
}
}
\newcommand{\fn}{{\mathfrak n}}
\newcommand{\fg}{{\mathfrak g}}
\begin{document}

\maketitle

\begin{abstract}
For every finite dimensional Lie supergroup $(G,\g g)$, we define a $C^*$-algebra $\cA:=\cA(G,\g g)$, and show that there exists a canonical bijective correspondence between unitary representations of $(G,\g g)$ and nondegenerate $*$-representations of $\cA$. The proof of existence of such a correspondence relies on a subtle characterization of smoothing operators of unitary representations from \cite{nsz}. 

For a broad class of Lie supergroups, 
which includes nilpotent as well as classical simple ones,
we  prove that the associated $C^*$-algebra is CCR. In particular, we obtain the uniqueness of direct integral decomposition for unitary representations of these Lie supergroups.\\
Keywords: Lie supergroups, Harish--Chandra pairs, unitary representations, 
$C^*$-algebra, direct integral decomposition \\ 
MSC2010: 22E45, 17B15, 47L65
\end{abstract}

\section{Introduction}

Unitary representations of Lie supergroups 
play an important role in the mathematical theory of SUSY quantum mechanics.  
One distinguished example of the role of these unitary representations is  the classification of free relativistic super particles (see \cite{FSZ81} and \cite{SS74}), where a super analogue of the little group method of Mackey and Wigner is used.

Although the super version of the Mackey--Wigner method was used in the physics literature as early as the 1970's,  the problem of mathematical validity of this method in the context of supergroups was not addressed until less than a decade ago.
This was done in \cite{varadarajan}, where the authors remedy this issue by laying the mathematically rigorous foundations of the analytic theory of unitary representations of Lie supergroups, using the equivalence of categories between the category of Lie supergroups and the category of  
\emph{Harish--Chandra pairs}
\cite[Sec.\ 3.8]{delignemorgan}, \cite[Sec.\ 3.2]{kostant}. 
The Harish--Chandra pair description of Lie supergroups will be explained  in Definition \ref{def-blsupergroup} below.

The groundwork laid in \cite{varadarajan} has spawned 
research on the harmonic analysis of Lie supergroups. In particular, in \cite{SalCMP}
the irreducible unitary representations of a nilpotent Lie supergroup  are classified using an extension of Kirillov's orbit method (see also \cite[Sec.\ 8]{nsLNM}). For Lie supergroups corresponding to basic classical  Lie superalgebras \cite[Def.\ 1.14]{musson}, the irreducible unitary representations are indeed highest weight modules
\cite[Sec.\ 7]{nsLNM}, 
and therefore they are completely classified by the work done in
\cite{jakobsen}.

 The  goal of this paper is to systematically study  disintegration of arbitrary unitary representations of Lie supergroups into
direct integrals of  
  irreducible representations. To this end, for every finite dimensional Lie supergroup $(G,\g g)$ we construct a $C^*$-algebra $\cA:=\cA(G,\g g)$ whose nondegenerate $*$-representations are in bijective correspondence with unitary representations of $(G,\g g)$.  
The $C^*$-algebra $\cA$ is obtained as the completion 
(with respect to a suitable $C^*$-seminorm) of
a crossed product $*$-algebra $\cA^\circ$  associated to the adjoint action of $G$ 
on the universal enveloping algebra $\bfU(\g g)$.
Here indeed it will be more convenient to replace $G$ by a slightly larger group $G_\boldeps\cong G\times \{1,\boldeps\}$, as the action of the extra element $\boldeps$ will automatically keep track of the  $\Z_2$-grading of the representation space.
Starting from a unitary representation $(\pi,\rho^\pi,\ccH)$ 
of $(G,\g g)$,  we obtain a representation of $\cA$ by first extending 
$(\pi,\rho^\pi,\ccH)$ canonically to  $\cA^\circ$, and then uniquely to a nondegenerate $*$-representation $(\whpi,\ccH)$ of $\cA$ by continuity. Nevertheless, the construction of a representation of $(G,\g g)$ from a representation $(\whpi,\ccH)$ of $\cA$ is more subtle, 
because the standard method of extending $(\whpi,\ccH)$ to the multiplier algebra 
$M(\cA)$ is \emph{not} sufficient in our context. 
Indeed the Lie superalgebra $\g g$ does not act on $\cA$ by multipliers. To circumvent this issue, we  use the extension of $(\whpi,\ccH)$  to the multiplier algebra $M(\cA^\circ)$, and use the fact that $G$ and $\g g$ act on $\cA^\circ$ through $M(\cA^\circ)$. To complete the construction of the unitary representation of $(G,\g g)$, we need to show that
the action of $\g g$ is indeed defined on $\ccH^\infty$. To this end,  we prove that $\whpi(\cA^\circ)\ccH=\ccH^\infty$, where $\ccH^\infty$ denotes the space of smooth vectors of the action of $G$ on $\ccH$.
The proof of the latter statement requires the Dixmier--Malliavin Theorem \cite{dixmal} and a subtle result
from \cite[Thm 2.11]{nsz}
on the characterization of smoothing operators of unitary representations, that is,
 operators $A:\ccH\to\ccH$ which map $\ccH$ into $\ccH^\infty$.

By the standard machinery of $C^*$-algebras \cite{dixmier}, statements on the existence and uniqueness of disintegration of nondegenerate $*$-representations of $\cA$ can be transformed to similar statements on direct integral decompositions of unitary representations of $(G,\g g)$. 
To obtain  uniqueness of disintegration, 
 it suffices  to know that $\cA$ is 
 CCR, that is, the image of every irreducible $*$-representation of 
  $\cA$ lies in the algebra of compact operators.
(Such $C^*$-algebras are sometimes called \emph{liminal}.)
We prove that $\cA$ is  CCR for a broad class of Lie 
supergroups, which includes  nilpotent Lie supergroups as well as those which correspond to classical simple Lie superalgebras (see \cite[Sec.\ 1.3]{musson}). Therefore for the aforementioned classes of Lie supergroups, one obtains uniqueness of disintegration of unitary representations.
 
This article is organized as follows.
Section \ref{Sec-not} is devoted to definitions and basic properties of unitary representations that will be used in the rest of the paper. In Section 
\ref{sec-crossed}
 we define the crossed product $*$-algebra $\cA^\circ$. 
In Section \ref{seccomp}
we construct the $C^*$-algebra $\cA:=\cA(G,\g g)$ as the 
completion of the crossed product algebra $\cA^\circ$  with respect to a suitable 
$C^*$-seminorm.
In Section \ref{sec-mult} we prove that under the $G$-action on $\cA$ by multipliers, 
orbit maps of elements of $\cA^\circ$ are smooth. 
In Section \ref{sec-nondeg} we describe the canonical bijective correspondence between unitary representations of the Lie supergroup $(G,\g g)$ and  the (ungraded) nondegenerate $*$-representations of 
$\cA$. Finally, in Section 
\ref{sec-unique} we obtain our liminality results for  $C^*$-algebras of a broad class of Lie supergroups, including the nilpotent and classical simple ones.\\[2mm]
\textbf{Acknowledgement.} During the completion of this project, the second author was supported by an NSERC Discovery Grant and  the Emerging Field Project
``Quantum Geometry'' of FAU Erlangen--N\"{u}rnberg.

\section{Basic definitions}
\label{Sec-not}

We begin by a rapid review of Lie supergroups (from the Harish--Chandra pair viewpoint) and their unitary representations. For a more elaborate reference, see \cite{varadarajan}.

Throughout this paper, $\Z/2\Z:=\{\oline 0,\oline 1\}$ denotes the field with two elements. If $V=V_\eev\oplus V_\ood$ is a $\Z/2\Z$-graded vector space, then the parity of a homogeneous element $x\in V$ is denoted by $|x|\in\Z/2\Z$.
\begin{dfn}
\label{def-blsupergroup}
A \emph{Lie supergroup} is an ordered pair $(G,\g g)$ with the following properties.
\begin{itemize}
\item[(i)] $G$ is a Lie group. 
\item[(ii)] $\g g=\g g_\eev\oplus\g g_\ood$ is a  Lie superalgebra over $\R$.
\item[(iii)] $\g g_\eev$ is the Lie algebra of $G$. 
\item[(iv)] There exists a group homomorphism 
$\Ad:G\to\mathrm{Aut}(\g g)$, defining a smooth action 
$G\times \g g\to \g g$, such that 
\[
\Ad(g)x=\dd c_g(\yek)(x)
\text{ and }
\dd\Ad^y(\yek)(x)=[x,y]
\]
for every $x\in\g g _\eev$, $y\in \g g$, and $g\in G$,
where
 $c_g:G\to G$ is defined by $c_g(g'):=gg'g^{-1}$ and $\Ad^y:G\to\g g$ is defined by $\Ad^y(g):=\Ad(g)y$.
\end{itemize}
\end{dfn}
In this article we assume that $\dim\g g<\infty$.
The Lie supergroup $(G,\g g)$ is called \emph{connected} if $G$ is a connected Lie group.
\begin{rmk}
Here we should clarify that the condition given in Definition \ref{def-blsupergroup}(iv)
is identical to the ones given in our previous papers 
\cite[Def.\ 4.6.3(iv)]{nsTransf} and \cite[Def.\ 7.1(iv)]{nsMZ}. More precisely, in \cite{nsTransf} and \cite{nsMZ} we tacitly assume that $\Ad$ is an extension of the adjoint action of $G$ on $\g g_\eev$. \end{rmk}

Let $(\pi,\ccH)$ be a unitary representation of a Lie group $G$. For  $x\in\Lie(G)$ and $v\in\ccH$, we set 
\[
{\dd\pi}(x)v:=\lim_{t\to 0}\frac{1}{t}\left(\pi(e^{tx})v-v\right),
\]
whenever the limit exists. Here $e^{tx}:=\exp(tx)$ denotes the exponential map of $G$.

\begin{dfn}
\label{defnungradgrad} 
Let $(G,\g g)$ be a Lie supergroup. 
A \emph{unitary representation} of 
$(G,\g g)$ is a triple
$
(\pi,\rho^\pi,\ccH)$
which satisfies
the following properties.
\begin{itemize} 
\item[(i)] 
$\ccH$ has a $\Z/2\Z$-grading, that is, 
$\ccH=\ccH_\eev\oplus\ccH_\ood$, and
$(\pi,\ccH)$ is a smooth unitary representation of $G$, such that $\pi(g)$ preserves the $\Z/2\Z$-grading of $\ccH$ for every $g\in G$.
\item[(ii)] $\rho^\pi:\g g\to \End_\mathbb C(\ccH^\infty)$  
 is a representation of the Lie superalgebra $\g g$, where
 $\ccH^\infty_{}=\ccH^\infty_\eev\oplus\ccH^\infty_\ood$ is the subspace consisting of all $v\in\ccH$ 
 for which the orbit map $G\to \ccH$, $g\mapsto \pi(g)v$ is smooth.
\item[(iii)]   
$\rho^\pi(x)={\dd\pi}(x)\big|_{\ccH^\infty}$ for every $x\in\g g_\eev$.
\item[(iv)] For every $x\in \g g_\ood$, the
operator $e^{-\frac{\pi i}{4}}\rho^\pi(x)$ is  
symmetric. That is,
\[
-i\rho^\pi(x) \subseteq \rho^\pi(x)^*.
\] 
\item[(v)] 
$\pi(g)\rho^\pi(x)\pi(g)^{-1}=\rho^\pi(\Ad(g)x)$ for every $g\in G$ and every $x\in\g g_\ood$.
\end{itemize}
\end{dfn}

\begin{rmk}
\label{rem-weaker(e)}
By \cite[Prop. 6.13]{nsMZ},  
the condition given in Definition 
\ref{defnungradgrad}(v) 
follows from the weaker condition that for every element of the component group $G/G^\circ$, there exists a coset representative $g\in G$ such that
\[
\pi(g)\rho^\pi(x)\pi(g^{-1})=\rho^\pi(\Ad(g)x)
\text{ for every }x\in\g g.
\] 
\end{rmk}

\begin{rmk}
\label{prp-dirsepp}
As in \cite[Def.\ 6.7.1]{nsTransf}, a unitary representation $(\pi,\rho^\pi,\ccH)$
is called \emph{cyclic} if there exists a 
vector $v\in\ccH^\infty_\eev$ such that
$\pi(G)\rho^\pi(\bfU(\g g_\C))v$ spans a dense subspace
of $\ccH$, where 
$\g g_\C:=\g g\otimes_\R^{}\C$ and
$\bfU(\g g_\C)$ denotes the universal enveloping algebra of $\g g_\C$. A standard Zorn Lemma argument shows that  every unitary representation can be written as a direct sum of representations which are cyclic up to parity change. Furthermore, in \cite[Thm 6.7.5]{nsTransf} a GNS construction is given which results in a correspondence between  cyclic unitary representations
and positive definite superfunctions on $(G,\g g)$.
\end{rmk}

Let $(\pi,\rho^\pi,\ccH)$ be a unitary representation of $(G,\g g)$.
We equip the space $\ccH^\infty$ 
with the topology induced by 
the seminorms $v\mapsto \|{\dd\pi}(D)v\|$, for all $D\in\bfU(\g g_\eev)$.  This topology makes $\ccH^\infty$ a Fr\'{e}chet space.
\begin{prp}
\label{rhopicont}
For every $x\in\g g$, the map $\rho^\pi(x):\ccH^\infty\to\ccH^\infty$
is continuous. 
\end{prp}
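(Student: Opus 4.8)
The plan is to deduce continuity from the closed graph theorem for Fr\'echet spaces. Since $\rho^\pi$ is linear we may assume $x$ is homogeneous, i.e.\ $x\in\g g_\eev$ or $x\in\g g_\ood$. Both the source and the target of $\rho^\pi(x)$ are the Fr\'echet space $\ccH^\infty$, so it suffices to prove that the graph of $\rho^\pi(x)\colon\ccH^\infty\to\ccH^\infty$ is closed. First I would record the elementary but crucial observation that, since $\yek\in\bfU(\g g_\eev)$ and $\dd\pi(\yek)=\id$, the Hilbert norm $\|\cdot\|=\|\dd\pi(\yek)\,\cdot\,\|$ is one of the seminorms defining the topology of $\ccH^\infty$; hence every $\ccH^\infty$-convergent sequence converges in $\ccH$ as well.

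Next I would reduce closedness of the graph to closability of $\rho^\pi(x)$ as a densely defined operator on $\ccH$ with domain $\ccH^\infty$. Indeed, suppose $v_n\to v$ and $\rho^\pi(x)v_n\to u$ in $\ccH^\infty$; then $v\in\ccH^\infty$, and by the previous remark $v_n\to v$ and $\rho^\pi(x)v_n\to u$ in $\ccH$. Setting $w_n:=v_n-v\in\ccH^\infty$ gives $w_n\to 0$ and $\rho^\pi(x)w_n\to u-\rho^\pi(x)v$ in $\ccH$, so closability yields $u-\rho^\pi(x)v=0$, i.e.\ $u=\rho^\pi(x)v$, as required.

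It then remains to check closability, which is where Definition \ref{defnungradgrad} really enters and which I regard as the crux. A densely defined operator is closable exactly when its adjoint is densely defined. For $x\in\g g_\eev$ unitarity of $\pi$ makes $\dd\pi(x)$ skew-symmetric on $\ccH^\infty$, so $\ccH^\infty\sseq\mathrm{dom}(\dd\pi(x)^*)$ and closability is automatic. For $x\in\g g_\ood$ condition (iv), namely $-i\rho^\pi(x)\sseq\rho^\pi(x)^*$, shows directly that $\ccH^\infty\sseq\mathrm{dom}(\rho^\pi(x)^*)$; since $\ccH^\infty$ is dense in $\ccH$ by G\aa rding's theorem, $\rho^\pi(x)^*$ is densely defined and $\rho^\pi(x)$ is closable.

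Alternatively, one can prove the statement by explicit seminorm estimates, which also exhibits how many derivatives $\rho^\pi(x)$ costs. For this I would establish, by induction on $\deg D$ and using $[\dd\pi(y),\rho^\pi(x)]=\rho^\pi([y,x])$ for $y\in\g g_\eev$, the commutation identity
\[
\dd\pi(D)\,\rho^\pi(x)=\sum_{(D)}\rho^\pi\big(\mathrm{ad}(D_{(1)})x\big)\,\dd\pi(D_{(2)})\quad\text{on }\ccH^\infty,
\]
where $\Delta D=\sum_{(D)}D_{(1)}\otimes D_{(2)}$ is the coproduct on $\bfU(\g g_\eev)$ and $\mathrm{ad}$ denotes the extension of the adjoint action to $\bfU(\g g_\eev)$ acting on the $\g g_\eev$-module $\g g_\ood$. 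Combining this with the bound
\[
\|\rho^\pi(z)w\|^2=\Big|\big\langle w,-\tfrac{i}{2}\dd\pi([z,z])w\big\rangle\Big|\le\tfrac12\|w\|\,\|\dd\pi([z,z])w\|\qquad(z\in\g g_\ood,\ w\in\ccH^\infty),
\]
which follows from condition (iv) together with the superalgebra relation $\rho^\pi(z)^2=\tfrac12\rho^\pi([z,z])=\tfrac12\dd\pi([z,z])$, one bounds each seminorm $\|\dd\pi(D)\rho^\pi(x)v\|$ by finitely many seminorms of $v$. In this second route the main obstacle is the bookkeeping of the commutation identity and checking that the elements $\mathrm{ad}(D_{(1)})x$ remain in $\g g_\ood$ so that the quadratic estimate applies; in the first route the only real content is the closability step, which is immediate from~(iv).
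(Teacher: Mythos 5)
Your proof is correct, and your primary route is genuinely different from the one in the paper. The paper argues by explicit seminorm estimates: for $x\in\g g_\ood$ it first establishes the quadratic bound $\|\rho^\pi(x)v\|^2\leq\tfrac12\|v\|\cdot\|\dd\pi([x,x])v\|$ (exactly your estimate, obtained from condition (iv) and $\rho^\pi(x)^2=\tfrac12\rho^\pi([x,x])$), and then controls $\|\dd\pi(x_1\cdots x_\ell)\rho^\pi(x)v\|$ by commuting $x$ to the left via $x_1\cdots x_\ell x=xx_1\cdots x_\ell+\sum_i x_1\cdots[x_i,x]\cdots x_\ell$ and inducting on $\ell$; your second route is this same argument, with the coproduct formula merely organizing the bookkeeping (and your observation that the elements $\mathrm{ad}(D_{(1)})x$ stay in $\g g_\ood$ is indeed the point that makes the induction close). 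Your first route replaces all of this by the closed graph theorem for Fr\'echet spaces, so that the only analytic input is closability of $\rho^\pi(x)$ as a densely defined operator on $\ccH$ with domain $\ccH^\infty$, which is immediate from condition (iv) of Definition \ref{defnungradgrad} in the odd case and from skew-symmetry of $\dd\pi(x)$ in the even case. That argument is shorter and softer; what it gives up is any quantitative information about which seminorms of $v$ dominate $\|\dd\pi(D)\rho^\pi(x)v\|$ and how the bounds depend on the representation --- information of the kind the paper needs again, in uniform form over all representations, in Proposition \ref{fDincAbdd}. For the purpose Proposition \ref{rhopicont} actually serves (e.g.\ in the proof of Lemma \ref{lem-pabpastar}, where only continuity is used to pull $\rho^\pi(D_2)$ through an integral), your soft argument is a valid and arguably cleaner substitute.
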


\begin{proof}
Continuity for $x\in\g g_\eev$ is standard, and therefore we will assume that $x\in\g g_\ood$. We need to prove that for every $x_1,\ldots x_\ell\in\g g_\eev$, for $D:=x_1\cdots x_\ell\in\bfU(\g g_\eev)$, the map 
\begin{equation}
\label{eq-Hinfpitov}
\ccH^\infty\to\R\ ,\ v\mapsto \|\dd\pi(D)\rho^\pi(x)v\|
\end{equation}
is continuous at $0\in\ccH^\infty$. 
First assume that $\ell=0$, so that $D=1\in\bfU(\g g_\eev)$. In this case, continuity of \eqref{eq-Hinfpitov} follows from the inequality
\begin{align*}
\|\rho^\pi(x)v\|^2&=\left|\lag v,\rho^\pi(x)^2v\rag\right|
\leq\frac12\|v\|\cdot\|\dd\pi([x,x])v\|
\end{align*}
and the definition of the topology of $\ccH^\infty$.
To prove continuity of \eqref{eq-Hinfpitov} for $\ell\geq 1$, we  use the relation
\[
x_1\cdots x_\ell x=x x_1\cdots x_\ell+\sum_{i=1}^\ell x_1\cdots x_{i-1}[x_i,x]x_{i+1}\cdots x_\ell
\]
and induction on $\ell$.
\end{proof}
\begin{dfn}
A \emph{multiplier} of an associative algebra $\cA$  is a pair 
$(\lambda,\rho)$ of linear maps $\cA\to\cA$ 
which satisfy
the relations
\[
\lambda(ab)=\lambda(a)b,\
\rho(ab)=a\rho(b),\
\text{and }a\lambda(b)=\rho(a)b
\]
for every $a,b\in\cA$.
\end{dfn}
If $\cA$ is a $*$-algebra, then the multipliers of $\cA$ form a $*$-algebra, denoted by 
$M(\cA)$, with multiplication
and involution defined by
\begin{equation}
\label{mult***}
(\lambda,\rho)(\lambda',\rho'):=(\lambda\lambda',\rho'\rho)
\text{ and }
(\lambda,\rho)^*:=(\rho^*,\lambda^*),
\end{equation}
where $\lambda^*(a):=\lambda(a^*)^*$ and $\rho^*(a)=\rho(a^*)^*$.

\section{The crossed product  $*$-algebra $\cA^\circ$
}
\label{sec-crossed}
Fix a Lie supergroup $(G,\g g)$. Set  $G_\boldeps:=G\times\{\yek ,\boldeps\}$
such that $\boldeps^2=\yek$, and define $\Ad(\boldeps)x:=(-1)^{|x|}x$ for every homogeneous $x\in\g g$. We endow $G_\boldeps$ with the product topology. 
Clearly $(G_\boldeps,\g g)$ is also a Lie supergroup. 
Let $\cD(G_\boldeps)$ be the convolution algebra of test functions (i.e., smooth compactly supported complex-valued functions) on $G_\boldeps$. The convolution on $\cD(G_\boldeps)$ is defined by
\[
(f_1\star f_2)(g'):=\int_{G_\boldeps} f_1(g)f_2(g^{-1}g')dg,
\]
where $dg$ is the left-invariant Haar measure. 
The $*$-algebra structure 
is given by the involution 
\[ \breve f(g):=\Delta(g)^{-1}\oline{f(g^{-1})},\]
where
$g\mapsto \Delta(g)$ is the modular function satisfying $d(gg')=\Delta(g')dg$. 
From now on, we set
\[
\LL_gf(g'):=f(g^{-1}g')
\text{
and\ \
}
\sfR_x f(g)
:=
\lim_{t\to 0}
\frac1t\left(
\LL_{e^{tx}}^{}f(g)-f(g)\right),
\]
for $x\in\g g_\eev$, $g,g'\in G_\boldeps$, $f\in\cD(G_\boldeps)$, and $t\in\R$.

Set $\g g_\C:=\g g\otimes_\R^{}\C$.
For every $g\in G_\boldeps$, let 
$\alpha_g:\bfU(\g g_\C)\to\bfU(\g g_\C)$ denote the automorphism that is 
canonically induced by $\Ad(g):\g g\to\g g$.
Our next goal is to define a \emph{crossed product} 
$*$-algebra
$\cA^\circ=\cA^\circ(G,\g g)$.
As a complex vector space, 
\[
\cA^\circ:=\bfU(\g g_\C)\otimes \cD(G_\boldeps)
.
\] We identify $\cA^\circ$ with a subspace of the vector space of $\bfU(\g g_\C)$--valued functions on $G_\boldeps$ in the canonical way. Using this identification, we define a  multiplication 
and a complex conjugation on $\cA^\circ$ by the relations
\begin{equation}
\label{multofA}
(D_1\otimes f_1)(D_2\otimes f_2)(g'):=
\int_{G_\boldeps}
f_1(g)f_2(g^{-1}g')D_1\alpha_g( D_2)dg
\end{equation}
and 
\begin{equation}
\label{antlininv}
(D\otimes f)^*(g):=
\Delta(g^{-1})
\oline{f(g^{-1})}\alpha_g(D^\dagger),
\end{equation}
where  the map 
$
x\mapsto x^\dagger
$ 
is the anti-linear anti-automorphism of $\bfU(\g g_\C)$ uniquely defined by
\begin{equation}
\label{daggerrr}
x^\dagger:=\begin{cases}
-x& \text{ if }x\in\g g_\eev,\\
e^{-\frac{\pi i}{2}}x&\text{ if }x\in\g g_\ood.
\end{cases}
\end{equation}
In particular,
\[
(D_1\otimes f_1)(1\otimes f_2)=D_1\otimes (f_1\star f_2).
\]
Every $g\in G_\boldeps$ yields a multiplier $(\lambda_g,\rho_g)$ 
of $\cA^\circ$ by setting
\begin{equation}
\label{eq-lamgrhog}
\lambda_g(D\otimes f)
:=
\alpha_g(D)\otimes \LL_gf 
\text{ and }
\rho_g(D\otimes f):=
D\otimes\Delta(g^{-1})\mathrm{R}_{g^{-1}}f
,
\end{equation}
where $\mathrm{R}_gf(g'):=f(g'g)$.

The algebra $\cA^\circ$ is not necessarily unital. Nevertheless, we have the following lemma.
\begin{lem}
\label{DMAA=A}
Every $a\in\cA^\circ$ can be written as a finite sum \[
a=a_1b_1+\cdots +a_mb_m,
\] where $a_1,\ldots,a_m,b_1,\ldots,b_m\in\cA^\circ$. 
In other words, $\cA^\circ=\cA^\circ\cA^\circ$.
\end{lem}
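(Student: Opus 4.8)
The plan is to reduce the statement to elementary tensors and then apply the Dixmier--Malliavin factorization theorem to the $\cD(G_\boldeps)$ factor. Since $\cA^\circ=\bfU(\g g_\C)\otimes\cD(G_\boldeps)$ and the multiplication is bilinear, by linearity it suffices to show that every elementary tensor $a=D\otimes f$, with $D\in\bfU(\g g_\C)$ and $f\in\cD(G_\boldeps)$, lies in $\cA^\circ\cA^\circ$; the general case follows by summing over a finite expansion $a=\sum_j D_j\otimes f_j$.

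The crucial tool is the identity $(D_1\otimes f_1)(1\otimes f_2)=D_1\otimes(f_1\star f_2)$ recorded just before the statement, which shows that right multiplication by an element of the form $1\otimes f_2$ amounts to convolving the $\cD(G_\boldeps)$-component while carrying $D_1$ along untouched. Thus it is enough to factor $f$ as a convolution of test functions, and this is precisely what the Dixmier--Malliavin Theorem \cite{dixmal} provides: for the Lie group $G_\boldeps$ the convolution algebra satisfies $\cD(G_\boldeps)=\cD(G_\boldeps)\star\cD(G_\boldeps)$, so we may write $f=\sum_{i=1}^m f_i\star h_i$ with $f_i,h_i\in\cD(G_\boldeps)$.

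Combining these two ingredients yields
\[
a=D\otimes f=\sum_{i=1}^m D\otimes(f_i\star h_i)=\sum_{i=1}^m(D\otimes f_i)(1\otimes h_i),
\]
which exhibits $a$ as a finite sum of products of elements of $\cA^\circ$, as desired.

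I do not expect a genuine obstacle here: the entire content is the factorization of test functions under convolution, and the only point requiring a brief check is that Dixmier--Malliavin applies to $G_\boldeps$ rather than to $G$ itself. This is immediate from the product structure $G_\boldeps\cong G\times\{\yek,\boldeps\}$, which has finitely many components, so one may invoke the theorem directly on the Lie group $G_\boldeps$ or apply it componentwise on $G$. The $\bfU(\g g_\C)$-factor plays no active role beyond occupying the first slot, which is exactly why the simple right-multiplication identity suffices and the full crossed-product formula \eqref{multofA} need not be unwound.
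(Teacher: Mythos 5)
Your proof is correct and follows essentially the same route as the paper: reduce to elementary tensors $D\otimes f$, factor $f$ by the Dixmier--Malliavin Theorem as $\sum_i f_i\star h_i$, and use the identity $(D\otimes f_i)(1\otimes h_i)=D\otimes(f_i\star h_i)$. The only addition is your explicit remark that Dixmier--Malliavin applies to $G_\boldeps$, which the paper takes for granted.
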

\begin{proof}
It is enough to prove the statement for elements of $\cA^\circ$ of the form $D\otimes f$. By the Dixmier--Malliavin Theorem \cite{dixmal}, we can write 
\[
f=f_1\star h_1+\cdots+f_m\star h_m,
\] where
$f_1,\ldots,f_m,h_1,\ldots,h_m\in\cD(G_\boldeps)$. 
It follows that
\[
D\otimes f=(D\otimes f_1)(1\otimes h_1)+\cdots+
(D\otimes f_m)(1\otimes h_m).\qedhere
\]
\end{proof}

Every unitary representation
$(\pi,\rho^\pi,\ccH)$ of $(G,\g g)$
extends to a unitary representation of $(G_\boldeps,\g g)$ by setting $\pi(\boldeps)v=(-1)^{|v|}v$ for every homogeneous $v\in\ccH$. From now on, 
we assume that every unitary representation of $(G,\g g)$ has been  extended to $(G_\boldeps,\g g)$ as indicated above.

Fix a unitary representation $(\pi,\rho^\pi,\ccH)$ of $(G,\g g)$. Let $D\otimes f\in\cA^\circ$, and as usual set \[
\pi(f):=\int_{G_\boldeps}f(g)\pi(g)dg.
\]   Note that 
$\|\pi(f)\|\leq \|f\|_{L^1}$.
By G\aa rding's Theorem we know that 
$\pi(f)\ccH\subseteq\ccH^\infty$, so that the
linear map 
\[
\rho^\pi(D)\pi(f):\ccH\to \ccH
\] is well-defined. 
\begin{prp}
\label{fDincAbdd}
Let $D\otimes f\in\cA^\circ$. There exists a  constant $M_{D\otimes f}^{}>0$  such that 
\[
\|\rho^{\pi}(D)\pi(f)
\|\leq M_{D\otimes f}^{}
\] for every 
unitary representation $(\pi,\rho^{\pi},\ccH)$ 
of $(G,\g g)$.
\end{prp}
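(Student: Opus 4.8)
The plan is to fix $D\otimes f$ and to bound $\rho^\pi(D)\pi(f)$ by moving the Lie superalgebra action onto the test-function side, where it becomes a convolution operator controlled by an $L^1$-norm that does not see the representation. By the PBW theorem and linearity of $\rho^\pi$, I would first reduce to the case in which $D$ is an ordered monomial in a fixed homogeneous basis of $\g g$, and then argue by induction on the number $k$ of odd basis elements occurring in $D$. To organize this, let $\bfU_k\subseteq\bfU(\g g_\C)$ denote the span of those PBW monomials involving at most $k$ odd factors; this is an exhaustive algebra filtration whose bottom layer $\bfU_0$ is the subalgebra generated by $\g g_\eev$, and it suffices to produce, for each $k$, a bound on $\|\rho^\pi(D)\pi(f)\|$ valid for all $D\in\bfU_k$ and independent of $\pi$.

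For the base case $k=0$, I would use the identity $\pi(e^{tx})\pi(f)=\pi(\LL_{e^{tx}}f)$, which upon differentiation at $t=0$ gives $\dd\pi(x)\pi(f)=\pi(\sfR_x f)$ for $x\in\g g_\eev$. Iterating this for $D=x_1\cdots x_\ell\in\bfU_0$ yields
\[
\rho^\pi(D)\pi(f)=\dd\pi(x_1)\cdots\dd\pi(x_\ell)\pi(f)=\pi(\sfR_{x_1}\cdots\sfR_{x_\ell}f),
\]
and since $\sfR_{x_1}\cdots\sfR_{x_\ell}f\in\cD(G_\boldeps)$ one obtains $\|\rho^\pi(D)\pi(f)\|\le\|\sfR_{x_1}\cdots\sfR_{x_\ell}f\|_{L^1}$, a constant depending only on $D$ and $f$.

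For the inductive step I would write a monomial with $k\ge 1$ odd factors as $D=XD'$ with $X\in\g g_\ood$ and $D'\in\bfU_{k-1}$. Setting $w:=\rho^\pi(D')\pi(f)v$, G\aa rding's theorem gives $\pi(f)v\in\ccH^\infty$ and Proposition \ref{rhopicont} gives $w\in\ccH^\infty$. Using the symmetry relation $-i\rho^\pi(X)\subseteq\rho^\pi(X)^*$ from Definition \ref{defnungradgrad}(iv) together with $\rho^\pi(X)^2=\tfrac12\rho^\pi([X,X])=\tfrac12\dd\pi([X,X])$ (valid since $[X,X]\in\g g_\eev$), I would compute on smooth vectors
\[
\|\rho^\pi(D)\pi(f)v\|^2=\langle\rho^\pi(X)w,\rho^\pi(X)w\rangle=\langle w,\rho^\pi(X)^*\rho^\pi(X)w\rangle=-\tfrac{i}{2}\langle w,\dd\pi([X,X])w\rangle.
\]
Taking absolute values and applying the Cauchy--Schwarz inequality, and noting $\dd\pi([X,X])w=\rho^\pi([X,X]D')\pi(f)v$, this leads to
\[
\|\rho^\pi(D)\pi(f)v\|^2\le\tfrac12\,\|\rho^\pi(D')\pi(f)v\|\cdot\|\rho^\pi([X,X]D')\pi(f)v\|.
\]
Since both $D'$ and $[X,X]D'$ lie in $\bfU_{k-1}$, the induction hypothesis bounds each factor on the right by a constant independent of $\pi$, which produces the desired uniform bound for $D$ (with $M_{D\otimes f}$ the square root of half the product of the two previous constants).

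The step requiring the most care is the inductive one: every manipulation must take place on $\ccH^\infty$, where $\rho^\pi$ is genuinely defined and multiplicative, so that the adjoint relation of Definition \ref{defnungradgrad}(iv) can legitimately be applied to $\rho^\pi(X)w$; this is precisely where G\aa rding's theorem and the continuity statement of Proposition \ref{rhopicont} enter. The second thing I would track is that at every stage the bounding constant is assembled solely from $L^1$-norms of $\sfR$-derivatives of the fixed function $f$ and from the structure constants of $\g g$, so that the final constant $M_{D\otimes f}$ depends on $D\otimes f$ but is genuinely uniform over all unitary representations $(\pi,\rho^\pi,\ccH)$.
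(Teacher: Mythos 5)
Your argument is correct and follows essentially the same route as the paper: the even part is converted into the $L^1$-bound $\|\pi(\sfR_D f)\|\le\|\sfR_D f\|_{L^1}$, and odd factors are peeled off one at a time via the symmetry $-i\rho^\pi(x)\subseteq\rho^\pi(x)^*$, the identity $\rho^\pi(x)^2=\tfrac12\dd\pi([x,x])$, and Cauchy--Schwarz, with an induction on the number of odd factors. The only difference is bookkeeping: where the paper explicitly commutes $[x_1,x_1]$ rightward past the remaining odd factors (its equation \eqref{rhopij=2d}), you absorb that step into the observation that $[X,X]D'$ lies in the filtration level $\bfU_{k-1}$, which is a harmless and slightly cleaner packaging of the same estimate.
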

\begin{proof}
Note that
$\pi(g)\pi(f)=\pi(\LL_gf)$ for $g\in G_\boldeps$ and $f\in\cD(G_\boldeps)$, 
and  for $x\in\g g_\eev$ we have
$
\lim_{t\to 0}
\left\|
\frac1t\left(
\LL_{e^{tx}}f-f
\right)-\sfR_xf\right\|_{L^1}=0
$.
Thus  for every $v\in\ccH$, we obtain that
\begin{align}
\label{ddpesss}
\dd\pi(x)\pi(f)v
&=
\lim_{t\to 0}
\frac1t
\left(\pi(e^{tx}_{})
\pi(f)v-\pi(f)v\right)\\
&=
\lim_{t\to 0}
\frac1t
\left(\pi(\LL_{e^{tx}}f)v-\pi(f)v\right)
=\pi(\sfR_xf)v.
\notag
\end{align}
By induction, from \eqref{ddpesss} it follows that 
\begin{align}
\label{ddpiepsDrho}
\dd\pi(D)\pi(f)v=\pi(\sfR_Df)v
\ \text{ for }D\in\bfU(\g g_\eev),
\ f\in\cD(G_\boldeps),
\text{ and }v\in\ccH.
\end{align}
If $x\in \g g_\ood$, then from \eqref{ddpesss}
it follows that
\begin{align}
\label{xinggg1}
\|\rho^\pi(x)\pi(f)v\|^2
&=
\lag \rho^\pi(x)\pi(f)v,\rho^\pi(x)\pi(f)v\rag=
\frac12|\lag \rho^\pi([x,x])\pi(f)v,\pi(f)v\rag|\\
&\leq \frac12
\|\rho^\pi([x,x])\pi(f)v\|
\cdot
\|\pi(f)v\|\leq
\frac12
\|\sfR_{[x,x]}f\|_{L^1}^{}\cdot
\|f\|_{L^1}^{}\cdot \|v\|^2.
\notag
\end{align}
Similarly, if $x_1,\ldots, x_d\in\g g_\ood$ for some $d>1$, then
\begin{align}
\label{rhoPIcdotsxd}
\notag
\|\rho^\pi(&x_1)\cdots\rho^\pi(x_d)\pi(f)v\|^2
=
\lag 
\,\rho^\pi(x_1)\cdots\rho^\pi(x_d)\pi(f)v
,
\rho^\pi(x_1)\cdots\rho^\pi(x_d)\pi(f)v
\,\rag\\
&\leq
\frac12\|\rho^\pi(x_2)\cdots\rho^\pi(x_d)\pi(f)v\|
\cdot
\|\rho^\pi([x_1,x_1])
\rho^\pi(x_2)
\cdots\rho^\pi(x_d)\pi(f)v\|.
\end{align}
Furthermore,
\begin{align}
\label{rhopij=2d}
&\rho^\pi([x_1,x_1])\rho^\pi(x_2)
\cdots\rho^\pi(x_d)\pi(f)v\\
\notag
&=
\sum_{j=2}^d
\rho^\pi(x_2)\cdots\rho^\pi([x_1,x_1],x_j])
\cdots\rho^\pi(x_d)\pi(f)v
+
\rho^\pi(x_2)\cdots\rho^\pi(x_d)
\pi(\sfR_{[x_1,x_1]}f)v.
\end{align}
By the PBW Theorem, it is enough  to prove the 
statement of the proposition when 
\[
D=
y_1\cdots y_{\ell'}
x_1\cdots x_\ell
,
\]
where $x_1,\ldots,x_\ell\in\g g_\eev$ and $y_1,\ldots,y_{\ell'}\in\g g_\ood$.
From \eqref{ddpiepsDrho}, 
\eqref{xinggg1},
\eqref{rhoPIcdotsxd}, and \eqref{rhopij=2d}, and by induction on $\ell'$, it follows that $\|\rho^{\pi}(D)\pi(f)\|$ is bounded above by a constant which is expressible 
in terms of the $L^1$-norms of derivatives of $f$.
\end{proof}

\section{The $C^*$-algebra $\cA:=\cA(G,\g g)$}
\label{seccomp}

For a unitary representation $(\pi,\rho^\pi,\ccH)$ of $(G,\g g)$, we define the linear map
$\whpi:\cA^\circ\to B(\ccH)$  by setting
\begin{equation}
\label{whpidef}
\whpi(D\otimes f):=\rho^{\pi}(D)\pi(f)
\text{ for every }D\otimes f\in\cA^\circ,
\end{equation}
and then extending $\whpi$ to a linear map on $\cA^\circ$. 
Consider the seminorm on $\cA^\circ$ defined by
\begin{equation}
\label{seminorm}
\|a\|:=\sup_{(\pi,\rho^\pi,\ccH)}\|\whpi(a)\|
\end{equation}
where the supremum is taken over all unitary equivalence classes of cyclic unitary representations ${(\pi,\rho^\pi,\ccH)}$ of $(G,\g g)$. From
Proposition \ref{fDincAbdd} it follows that $\|a\|<\infty$.

\begin{lem}
\label{lem-pabpastar}
$\whpi$ is a $*$-representation of $\cA^\circ$.
\end{lem}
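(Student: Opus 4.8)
The plan is to verify that $\whpi$ is a $*$-representation by checking the two defining properties separately: multiplicativity, $\whpi(ab) = \whpi(a)\whpi(b)$, and $*$-compatibility, $\whpi(a^*) = \whpi(a)^*$. Since both sides of each identity are linear (resp.\ antilinear) in the relevant arguments, it suffices to verify them on elementary tensors $D \otimes f \in \cA^\circ$, and then extend by (anti)linearity. The key input is the explicit formulas \eqref{multofA} for the multiplication and \eqref{antlininv} for the involution on $\cA^\circ$, together with the definition \eqref{whpidef} of $\whpi$ on elementary tensors and the basic intertwining relations $\pi(g)\pi(f) = \pi(\LL_g f)$ and $\pi(g)\rho^\pi(D)\pi(g)^{-1} = \rho^\pi(\alpha_g(D))$ for $g \in G_\boldeps$ and $D \in \bfU(\g g_\C)$, the latter being the enveloping-algebra version of Definition \ref{defnungradgrad}(v).

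For multiplicativity, I would compute $\whpi\big((D_1 \otimes f_1)(D_2 \otimes f_2)\big)$ by applying $\whpi$ to the integral expression in \eqref{multofA}. This yields an integral $\int_{G_\boldeps} f_1(g) f_2(g^{-1}g')\, \rho^\pi(D_1 \alpha_g(D_2))\,\cdots\, dg$ over the convolution, and the goal is to recognize it as the composite $\rho^\pi(D_1)\pi(f_1)\,\rho^\pi(D_2)\pi(f_2)$. The crucial maneuver is to move $\pi(f_1)$ past $\rho^\pi(D_2)$: writing $\pi(f_1) = \int f_1(g)\pi(g)\,dg$ and conjugating, one uses $\pi(g)\rho^\pi(D_2) = \rho^\pi(\alpha_g(D_2))\pi(g)$ so that the factor $\rho^\pi(\alpha_g(D_2))$ appears inside the integral exactly as in \eqref{multofA}, while the remaining $\pi(g)\pi(f_2) = \pi(\LL_g f_2)$ reproduces the convolution shift $f_2(g^{-1}g')$. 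One subtlety to flag is that these manipulations involve interchanging the integration over $G_\boldeps$ with the (generally unbounded) operator $\rho^\pi(D_1)$; since $\pi(f)\ccH \subseteq \ccH^\infty$ by G\aa rding's theorem and the relevant orbit maps are smooth, the integrals may be interpreted as $\ccH^\infty$-valued and the exchange is justified on the dense domain.

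For $*$-compatibility I would apply $\whpi$ to \eqref{antlininv} to get $\whpi\big((D \otimes f)^*\big) = \int_{G_\boldeps} \Delta(g^{-1})\oline{f(g^{-1})}\,\rho^\pi(\alpha_g(D^\dagger))\,\pi(g)\cdots$, and compare it with the adjoint $\whpi(D \otimes f)^* = \pi(f)^* \rho^\pi(D)^*$. Here the two ingredients are, first, the formula $\pi(f)^* = \pi(\breve f)$ with $\breve f(g) = \Delta(g)^{-1}\oline{f(g^{-1})}$, reflecting unitarity of $\pi$; and second, the computation of $\rho^\pi(D)^*$ using the conditions of Definition \ref{defnungradgrad}: for $x \in \g g_\eev$ the operator $\dd\pi(x)$ is skew-adjoint, which matches $x^\dagger = -x$, while for $x \in \g g_\ood$ condition (iv) gives $\rho^\pi(x)^* \supseteq -i\rho^\pi(x)$, matching $x^\dagger = e^{-\frac{\pi i}{2}}x = -ix$. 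Extending multiplicatively via the anti-automorphism property of $\dagger$ and then reinserting the conjugation $\alpha_g$ (which is intertwined with $\pi(g)$) should reconcile the two expressions.

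The main obstacle will be the $*$-compatibility check rather than multiplicativity, because it forces one to combine the adjoint behavior of the \emph{unbounded} operators $\rho^\pi(D)$ with the inner automorphism $\alpha_g$ and the modular factor $\Delta$ simultaneously. In particular one must track the constant $e^{-\frac{\pi i}{2}}$ arising from the odd part of $\dagger$ through products of odd generators and confirm it is exactly compensated by the choice in \eqref{daggerrr}; this bookkeeping, together with the care needed to ensure all adjoint identities hold as genuine operator equalities on the smooth domain (where $\rho^\pi(D)$ is everywhere defined and its formal adjoint agrees with the Hilbert-space adjoint on $\ccH^\infty$), is where the real content lies. I would isolate the degree-one cases $D = x$ for $x$ even and odd as lemmas, then bootstrap to arbitrary PBW monomials by the anti-automorphism property of $\dagger$ and the multiplicativity already established.
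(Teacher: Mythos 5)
Your proposal is correct and follows essentially the same route as the paper's proof: multiplicativity via the intertwining relation $\pi(g)\rho^\pi(D_2)\pi(g)^{-1}=\rho^\pi(\alpha_g(D_2))$ together with the finite decomposition $\alpha_g(D_2)=\sum_i\eta_i(g)E_i$, with the interchange of $\rho^\pi(D_1)$ and the integral justified by G\aa rding's theorem, Poulsen's smoothness of orbit maps into $\ccH^\infty$, and Proposition \ref{rhopicont}; and $*$-compatibility via $\pi(f)^*=\pi(\breve f)$ and the relation $\lag\rho^\pi(D)\pi(f)v,w\rag=\lag v,\pi(\breve f)\rho^\pi(D^\dagger)w\rag$ on the smooth domain. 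The paper is terser on the adjoint step, but your bookkeeping of $\dagger$ on even and odd generators matches exactly what is needed.
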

\begin{proof}
First we prove that
$\whpi(ab)=\whpi(a)\whpi(b)$ for every $a,b\in\cA^\circ$. It is enough to assume that $a=D_1\otimes f_1$ and $b=D_2\otimes f_2$. Choose $\eta_1,\ldots,\eta_r\in \cC^\infty(G_\boldeps)$ 
and $E_1,\ldots,E_r\in\bfU(\g g_\C)$  such that
$\alpha_g(D_2)=\sum_{i=1}^r\eta_i(g)E_i$.
Fix $v\in\ccH$ and set $w:=\pi(f_2)v$. Then $w\in\ccH^\infty$ and therefore the map $G_\boldeps\to \ccH^\infty$, $g\mapsto \pi(g)w$ is smooth \cite[Prop. 2.1]{Poulsen}. Using
Proposition \ref{rhopicont} we obtain   
\begin{align*}
\whpi(a)\whpi(b)v&=\rho^\pi(D_1)\int_{G_\boldeps}
f_1(g)\pi(g)\rho^\pi(D_2)\pi(f_2)v dg\\
&=
\rho^\pi(D_1)\int_{G_\boldeps}
f_1(g)\rho^\pi(\alpha_g(D_2))\pi(g)\pi(f_2)v dg\\
&=\rho^\pi(D_1)\sum_{i=1}^r\rho^\pi(E_i)\pi(\eta_if_1)
\pi(f_2)v\\
&
=\whpi\left(\sum_{i=1}^rD_1E_i\otimes (\eta_if_1 * f_2)\right)v=\whpi(ab)v.
\end{align*}

The equality $\whpi(a)^*=\whpi(a^*)$ can be verified 
by a similar calculation, using the relation
\[
\lag\rho^\pi(D\otimes f)v,w\rag
=
\lag\rho^\pi(D)\pi(f)v,w\rag=\lag v,\pi(\breve f)\rho^\pi(D^\dagger)w\rag
\quad \mbox{ for } \quad v,w \in \ccH^\infty,
\] where $\breve f(g)=\Delta(g)^{-1}\oline{f(g^{-1})}$.
\end{proof}
We are now ready to define $\cA:=\cA(G,\g g)$.
From Lemma \ref{lem-pabpastar} it follows that the map $a\mapsto a^*$ is an isometry of $\cA^\circ$ and $\|aa^*\|=\|a\|^2$.
Set $\cA^\circ_-:=\{a\in\cA^\circ\,:\,\|a\|=0\}$ and let 
$\cA$ denote the completion of the quotient $\cA^\circ/\cA^\circ_-$ with respect to its induced norm. It is straightforward to check that $\cA$ is  a $C^*$-algebra. 

\begin{lem}
\label{lem-gammafDh}
Let $f\in \cD(G_\boldeps)$ and $D\otimes h\in \cA^\circ$. Then the map 
\[
\gamma_{f,D,h}^{}:G_\boldeps\to\cA\ ,\ 
g\mapsto f(g)\alpha_g(D)\otimes \LL_gh
\]
is continuous and 
\begin{equation}
\label{gfDh}
\int_{G_\boldeps}\gamma_{f,D,h}^{}(g)dg=(1\otimes f)(D\otimes h).
\end{equation}
\end{lem}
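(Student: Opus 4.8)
The plan is to establish continuity of $\gamma_{f,D,h}^{}$ first, and then to verify the integral identity \eqref{gfDh} by testing against all cyclic representations. For continuity, I would use that $D$ lies in a finite-dimensional $\Ad$-invariant subspace of $\bfU(\g g_\C)$, so that, exactly as in the proof of Lemma \ref{lem-pabpastar}, one writes $\alpha_g(D)=\sum_{i=1}^r\eta_i(g)E_i$ with $E_i\in\bfU(\g g_\C)$ fixed and $\eta_i\in\cC^\infty(G_\boldeps)$. This yields the decomposition
\[
\gamma_{f,D,h}^{}(g)=\sum_{i=1}^r E_i\otimes\big(f(g)\eta_i(g)\,\LL_gh\big),
\]
reducing continuity to that of each summand. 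Since $f$ has compact support, $g$ effectively ranges over the compact set $\supp(f)$, and all functions $f(g)\eta_i(g)\LL_gh$ are supported in the fixed compact set $\supp(f)\cdot\supp(h)$; thus they lie in a single Fr\'echet space $\cD_K(G_\boldeps)$, on which left translation $g\mapsto\LL_gh$ and the smooth scalar factors $f(g)\eta_i(g)$ depend continuously.

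The essential step is to pass from continuity in $\cD_K(G_\boldeps)$ to continuity in the $C^*$-norm of $\cA$, and this is the point I expect to be the main obstacle. Here I would invoke the estimate underlying Proposition \ref{fDincAbdd}: the constant $M_{E_i\otimes\phi}^{}$ bounding $\|E_i\otimes\phi\|$ is controlled by finitely many $L^1$-norms of derivatives $\sfR_E\phi$, i.e.\ by a continuous seminorm on $\cD_K(G_\boldeps)$. Applying this to the differences $\phi=f(g)\eta_i(g)\LL_gh-f(g_0)\eta_i(g_0)\LL_{g_0}h$ and using the continuity just noted shows $\|\gamma_{f,D,h}^{}(g)-\gamma_{f,D,h}^{}(g_0)\|\to 0$ as $g\to g_0$. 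This interplay between the $C^*$-norm and the test-function topology is precisely what Proposition \ref{fDincAbdd} is designed to resolve.

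For the identity \eqref{gfDh}, continuity together with compact support guarantees that the Bochner integral $\int_{G_\boldeps}\gamma_{f,D,h}^{}(g)\,dg$ exists in $\cA$. Because the norm \eqref{seminorm} is the supremum of $\|\whpi(\cdot)\|$ over all cyclic unitary representations $(\pi,\rho^\pi,\ccH)$, it suffices to check that $\whpi$ agrees on both sides for every such representation. As a $*$-representation of the $C^*$-algebra $\cA$, the map $\whpi$ is norm-decreasing and hence commutes with the Bochner integral, giving
\[
\whpi\Big(\int_{G_\boldeps}\gamma_{f,D,h}^{}(g)\,dg\Big)=\int_{G_\boldeps}f(g)\,\rho^\pi(\alpha_g(D))\,\pi(\LL_gh)\,dg.
\]

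To evaluate the right-hand side I would use the covariance relation $\rho^\pi(\alpha_g(D))=\pi(g)\rho^\pi(D)\pi(g)^{-1}$, valid for all $D\in\bfU(\g g_\C)$: it holds on the generators $\g g$ by Definition \ref{defnungradgrad}(iii),(v), and extends multiplicatively since $\alpha_g$ is the algebra automorphism induced by $\Ad(g)$ and both sides are algebra homomorphisms $\bfU(\g g_\C)\to\End(\ccH^\infty)$. Combining this with $\pi(\LL_gh)=\pi(g)\pi(h)$ from the proof of Proposition \ref{fDincAbdd}, the integrand becomes $f(g)\,\pi(g)\rho^\pi(D)\pi(h)$; since $\rho^\pi(D)\pi(h)$ is a fixed bounded operator it factors out, and $\int_{G_\boldeps}f(g)\pi(g)\,dg=\pi(f)$, so the integral equals $\pi(f)\rho^\pi(D)\pi(h)$. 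On the other hand, multiplicativity of $\whpi$ (Lemma \ref{lem-pabpastar}) gives $\whpi\big((1\otimes f)(D\otimes h)\big)=\pi(f)\rho^\pi(D)\pi(h)$ as well. Since the two sides agree in every cyclic representation, they coincide in $\cA$, proving \eqref{gfDh}.
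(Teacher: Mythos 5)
Your proposal is correct. The continuity argument is essentially the paper's: the same decomposition $\alpha_g(D)=\sum_{i=1}^r\eta_i(g)E_i$, followed by the reduction, via the estimates behind Proposition \ref{fDincAbdd}, of convergence in the $C^*$-seminorm to convergence of $L^1$-norms of derivatives $\sfR_E\phi$ of the relevant test functions; the paper phrases this as $\lim_{g\to\yek}\|\sfR_E(\LL_gh-h)\|_{L^1}=0$ after translating to the identity, while you work with the differences directly in a fixed $\cD_K(G_\boldeps)$, but the content is identical. For the identity \eqref{gfDh}, however, you take a genuinely different route. The paper stays entirely inside the algebra: it evaluates $(1\otimes f)(D\otimes h)$ pointwise from the multiplication formula \eqref{multofA} to get $\sum_i E_i\otimes(f\eta_i\star h)$, and identifies the Bochner integral with the same expression using only the fact that the integrated left-regular representation of $G_\boldeps$ on $L^1(G_\boldeps)$ is given by convolution, $\int_{G_\boldeps}f(g)\LL_g h\,dg=f\star h$. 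You instead separate the two sides by the cyclic representations (legitimate, since the norm of $\cA$ is by construction the supremum of $\|\whpi(\cdot)\|$ over these, extended by continuity to the completion), push $\whpi$ through the Bochner integral, and use the covariance relation $\rho^\pi(\alpha_g(D))=\pi(g)\rho^\pi(D)\pi(g)^{-1}$ together with the multiplicativity of $\whpi$ from Lemma \ref{lem-pabpastar}. Your version imports more machinery (covariance on all of $\bfU(\g g_\C)$, Lemma \ref{lem-pabpastar}) where the paper needs only an $L^1$-level computation, but it is sound and arguably makes transparent \emph{why} the identity must hold, namely that both sides integrate the same covariant family of operators in every representation; there is no circularity, since everything you invoke precedes this lemma in the paper.
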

\begin{proof}
Choose $E_1,\ldots, E_r\in\bfU(\g g_\C)$ and $\eta_1,\ldots,\eta_r\in \cC^\infty(G_\boldeps)$ such that 
$\alpha_g(D)=\sum_{i=1}^r\eta_i(g)E_i$ for every $g\in G_\boldeps$. It follows that
\[
\gamma_{f,D,h}^{}(g)=\sum_{i=1}^rE_i\otimes f(g)\eta_i(g)\LL_gh.
\]
Next we prove that, for every $1\leq i\leq r$, the map $G_\boldeps\to\cA$, $g\mapsto E_i\otimes\LL_gh$ is continuous. Since we can replace $h$ by $\LL_gh$, 
it suffices to prove continuity at $\yek\in G_\boldeps$.
To this end, we need to show  that \[
\lim_{g\to \yek}\left(\sup_{(\pi,\rho^\pi,\ccH)}
\big\|\rho^\pi_{}(E_i)\pi(\LL_gh-h)\big\|\right)=0.
\] By an argument similar to the proof of Proposition \ref{fDincAbdd}, the latter statement can be reduced to showing that for every $D\in\bfU(\g g_\eev)$,
\[
\lim_{g\to \yek}\big\|
\sfR_{D}^{}(\LL_gh-h)
\big\|_{L^1}^{}=0.
\] 
This is straightforward.

Next we prove \eqref{gfDh}. From 
\eqref{multofA}
 it follows that
\begin{align}
\label{1otfDoth}
(1\otimes f)(D\otimes h)(g')&=
\sum_{i=1}^r
\int_{G_\boldeps}f(g)(\LL_gh)(g')\eta_i(g)E_idg
=\sum_{i=1}^r (f\eta_i\star h)(g')E_i.
\end{align}
The left-regular representation of $G_\boldeps$ on $L^1(G_\boldeps)$ is strongly continuous, and its integrated representation is given by convolution, that is, 
$\int_{G_\boldeps}f(g)\LL_gh\ dg=f\star h$ for every $f,h\in L^1(G_\boldeps)$.
It follows that
 \begin{align}
 \label{gafDhg}
\int_{G_\boldeps}\gamma_{f,D,h}^{}(g)dg&=
\sum_{i=1}^r E_i\otimes \int_{G_\boldeps}f(g)\eta_i(g)\LL_ghdg
=\sum_{i=1}^r E_i\otimes \big((f\eta_i)\star h\big).
\end{align} 
Equality \eqref{gfDh} now follows
from \eqref{1otfDoth} and \eqref{gafDhg}. 
\end{proof}

\section{Multipliers of $\cA$ and $\cA^\circ$}
\label{sec-mult}

For every $g\in G_\boldeps$, let $(\lambda_g,\rho_g)$ 
be the multiplier of $\cA^\circ$ that is defined  in \eqref{eq-lamgrhog}.
It is straightforward to verify that $\lambda_g$ and $\rho_g$ are isometries of $\cA^\circ$, and therefore the multiplier $(\lambda_g,\rho_g)$ extends uniquely to a multiplier of $\cA$.
For every $g\in G_\boldeps$, the  map
\begin{equation}
\label{eqEtaGga}
\eta_G(g):\cA\to\cA\ ,\ a\mapsto\lambda_g(a)
\end{equation}
 is an isometry and $\eta_G(gg')=\eta_G(g)\eta_G(g')$.
\begin{prp}
\label{cAsmooth}
For every $a\in\cA^\circ$, the map
\[
G\to\cA\ ,\ 
g\mapsto \eta_G(g)a
\] is smooth.
\end{prp}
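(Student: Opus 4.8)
The plan is to reduce to elementary tensors and then transfer the (standard) smoothness of the orbit map of $f$ in the test-function topology to the $C^*$-norm of $\cA$, using the uniform continuity estimate already established in the proof of Proposition \ref{fDincAbdd}.

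First I would use that $\eta_G(g)$ is linear and that $\cA^\circ$ is spanned by elements of the form $D\otimes f$, so it suffices to prove smoothness of
\[
g\mapsto \eta_G(g)(D\otimes f)=\lambda_g(D\otimes f)=\alpha_g(D)\otimes\LL_g f.
\]
As in the proof of Lemma \ref{lem-gammafDh}, since $\Ad(g)$ preserves the PBW filtration degree, the set $\{\alpha_g(D):g\in G\}$ lies in a fixed finite-dimensional subspace of $\bfU(\g g_\C)$; choosing $E_1,\dots,E_r\in\bfU(\g g_\C)$ and $\eta_1,\dots,\eta_r\in\cC^\infty(G_\boldeps)$ with $\alpha_g(D)=\sum_{i=1}^r\eta_i(g)E_i$, one obtains
\[
\eta_G(g)(D\otimes f)=\sum_{i=1}^r E_i\otimes\big(\eta_i(g)\LL_g f\big).
\]
Thus it is enough to show that each summand is a smooth $\cA$-valued function of $g$.

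The key point is a factorization through the test-function topology. The linear map $\iota_E:\cD(G_\boldeps)\to\cA$, $h\mapsto E\otimes h$, is continuous: by the estimate obtained in the proof of Proposition \ref{fDincAbdd}, $\|E\otimes h\|$ is dominated by a finite sum of seminorms $\|\sfR_{D'}h\|_{L^1}^{}$ with $D'\in\bfU(\g g_\eev)$, and each of these is a continuous seminorm on $\cD(G_\boldeps)$. On the other hand, the orbit map $G\to\cD(G_\boldeps)$, $g\mapsto\LL_g f$, is smooth for the natural (LF) topology of $\cD(G_\boldeps)$, its derivatives being obtained by applying the operators $\sfR_x$ ($x\in\g g_\eev$) and left translations, and hence remaining in $\cD(G_\boldeps)$. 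Multiplying by the smooth scalar function $\eta_i$ preserves smoothness via the Leibniz rule, so $g\mapsto\eta_i(g)\LL_g f$ is a smooth $\cD(G_\boldeps)$-valued map.

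Finally I would compose: since a continuous linear map sends smooth vector-valued functions to smooth vector-valued functions and commutes with differentiation, $g\mapsto E_i\otimes\big(\eta_i(g)\LL_g f\big)=\iota_{E_i}\big(\eta_i(g)\LL_g f\big)$ is smooth into $\cA$, and summing over $i$ gives the claim. The one genuine obstacle is the passage from the Fr\'echet topology of test functions to the $C^*$-norm of $\cA$: a priori, completion could render the orbit maps merely continuous. This is exactly what the continuity of $\iota_E$ --- i.e.\ the uniform bound of Proposition \ref{fDincAbdd} --- resolves, allowing the derivatives computed in $\cD(G_\boldeps)$ to be pushed forward verbatim to $\cA$.
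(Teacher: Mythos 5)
Your argument is correct, but it follows a genuinely different route from the paper's. The paper passes to the universal representation $\whpi_u=\bigoplus\whpi$ over all cyclic unitary representations, observes that $\whpi_u$ is isometric on $\cA^\circ$ and that $\whpi_u(\eta_G(g)a)=\pi_u(g)\whpi_u(a)$, and then deduces smoothness of $g\mapsto\pi_u(g)\whpi_u(a)$ in operator norm from the characterization of smoothing operators in \cite[Thm 2.11]{nsz}; you instead factor the orbit map through the test-function space, writing $\eta_G(g)(D\otimes f)=\sum_i\iota_{E_i}\big(\eta_i(g)\LL_gf\big)$ with $\iota_E(h)=E\otimes h$ continuous by the uniform estimates underlying Proposition \ref{fDincAbdd}, and then use the standard smoothness of $g\mapsto \LL_gf$ in $\cD(G_\boldeps)$. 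Your approach is self-contained and elementary (it is essentially the smooth strengthening of the continuity argument the paper itself uses in Lemma \ref{lem-gammafDh}), whereas the paper's approach is shorter given the external result and reuses machinery that is genuinely needed later, in Lemma \ref{piissmoothrp}, where one starts from an abstract representation of $\cA$ and no such explicit factorization is available. Two small points you should make explicit: first, the bound from Proposition \ref{fDincAbdd} is not a single seminorm but a continuous function (involving square roots of products of $L^1$-seminorms $\|\sfR_{D'}h\|_{L^1}$) vanishing at $h=0$; since $\iota_E$ is linear, continuity at $0$ suffices, so this is enough, but it deserves a sentence. Second, for smoothness of $g\mapsto\LL_gf$ into the LF space $\cD(G_\boldeps)$ one should note that for $g$ ranging over a compact neighbourhood the translates $\LL_gf$ have supports in a fixed compact set, so the map locally takes values in a single Fr\'echet step $\cD_K(G_\boldeps)$, where the usual Taylor estimates apply.
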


\begin{proof}
It suffices to prove that the orbit map of every $D\otimes f\in\cA^\circ$ is smooth.
Set
\[
\whpi_u:=\bigoplus_{(\pi,\rho^\pi,\ccH)}\!\!\whpi
\ \text{ and }\ 
(\pi_u,\ccH_u):=
\bigoplus_{(\pi,\rho^\pi,\ccH)}\!\!(\pi,\ccH)
\]
where the direct sums are over unitary equivalence classes of cyclic unitary representations of $(G,\g g)$. 
Then $(\pi_u,\ccH_u)$ is a smooth unitary representation of 
$G_\boldeps$, and 
the map
$\whpi_u:\cA^\circ\to B(\ccH_u)$ is an isometry. Furthermore,
\begin{align*}
\whpi_u(\eta(g)a)
=\pi_u(g)\whpi_u(a)\text{ for }g\in G_\boldeps\text{ and }
a\in\cA^\circ.
\end{align*}
Consequently, to complete the proof it suffices to show that for every $a\in\cA^\circ$, the map
\[
G\to B(\ccH_u)\ ,\ 
g\mapsto \pi_u(g)\whpi_u(a).
\]
is smooth. The latter statement follows from
\cite[Theorem 2.11]{nsz}.
\end{proof}

By \cite[Props. VIII.1.11/18]{doranfell}, every multiplier of $\cA$ is bounded and 
the multipliers of $\cA$ form a unital Banach $*$-algebra $M(\cA)$ with multiplication and complex conjugation \eqref{mult***} and the norm defined by $\|(\lambda,\rho)\|:=\max\{\|\lambda\|,\|\rho\|\}$.
Furthermore, the multipliers $(\lambda_g,\rho_g)$ for $g\in G_\boldeps$ are unitary, that is, 
\[
(\lambda_g,\rho_g)(\lambda_g,\rho_g)^*=1\in M(\cA).
\]

\section{Nondegenerate $*$--representations of $\cA$}
\label{sec-nondeg}
In this section we prove that the category of unitary representations of $(G,\g g)$ is isomorphic to the category of nondegenerate (in the sense of 
\cite[Def.\ V.1.7]{doranfell})
$*$--representations of the $C^*$-algebra $\cA=\cA(G,\g g)$. 

\begin{prp}
\label{prop-pirhotowhpi}
Let $(\pi,\rho^\pi,\ccH)$ 
be a unitary representation
of a Lie supergroup $(G,\g g)$.
Then the  $*$--representation $\whpi$ defined in {\rm Lemma~\ref{lem-pabpastar}}
 extends in a unique way to a nondegenerate $*$-representation 
\[
\whpi:\cA\to B(\ccH).
\]
\end{prp}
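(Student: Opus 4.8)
The plan is to show two things: first, that $\whpi$ is bounded with respect to the $C^*$-seminorm on $\cA^\circ$, so that it descends to a genuine $*$-representation of the completion $\cA$; and second, that this extension is nondegenerate. For the boundedness, the key observation is that the seminorm \eqref{seminorm} on $\cA^\circ$ is defined as a supremum over \emph{all} cyclic unitary representations. By Remark~\ref{prp-dirsepp}, an arbitrary unitary representation $(\pi,\rho^\pi,\ccH)$ decomposes (up to parity change) as a direct sum of cyclic ones, and $\whpi$ respects this direct sum decomposition. Since the operator norm of a direct sum is the supremum of the operator norms of the summands, we get $\|\whpi(a)\|\leq\|a\|$ for every $a\in\cA^\circ$, where $\|a\|$ is the seminorm of \eqref{seminorm}. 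In particular $\whpi$ vanishes on $\cA^\circ_-$, hence factors through $\cA^\circ/\cA^\circ_-$ as a contraction, and by the universal property of completion it extends uniquely and continuously to a $*$-representation $\whpi:\cA\to B(\ccH)$. Uniqueness of this extension is immediate, since $\cA^\circ/\cA^\circ_-$ is dense in $\cA$ and any two continuous extensions agreeing on a dense set coincide.

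The remaining point is nondegeneracy, that is, that $\whpi(\cA)\ccH$ is dense in $\ccH$. Here I would exploit the structure of the elements $1\otimes f$ with $f\in\cD(G_\boldeps)$. For such elements $\whpi(1\otimes f)=\pi(f)=\int_{G_\boldeps}f(g)\pi(g)\,dg$, the operators of the integrated representation of $G_\boldeps$. Choosing a Dirac-type approximate identity $(f_j)$ in $\cD(G_\boldeps)$ concentrated at the identity, strong continuity of $\pi$ gives $\pi(f_j)v\to v$ for every $v\in\ccH$. Thus every vector of $\ccH$ lies in the closure of $\whpi(\cA^\circ)\ccH$, which establishes nondegeneracy. (Equivalently, one notes that a vector orthogonal to all $\pi(f)\ccH$ would be orthogonal to its own approximation $\pi(f_j)v\to v$, forcing it to be zero.)

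I expect the main subtlety to be purely bookkeeping rather than conceptual: one must check that the direct-sum reduction in Remark~\ref{prp-dirsepp} is genuinely compatible with the definition of $\whpi$, and in particular that the ``parity change'' does not alter the relevant operator norms. Concretely, $\whpi$ is built from $\rho^\pi(D)\pi(f)$, and both $\pi$ and $\rho^\pi$ act diagonally with respect to any decomposition into cyclic subrepresentations, so $\whpi$ is block-diagonal and $\|\whpi(a)\|=\sup_j\|\whpi_j(a)\|$ over the cyclic blocks; a parity flip merely conjugates by a unitary and leaves the norm unchanged. Once this compatibility is recorded, the bound $\|\whpi(a)\|\leq\|a\|$ follows at once. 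The nondegeneracy argument via an approximate identity is entirely standard and presents no obstacle.
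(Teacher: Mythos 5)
Your argument is essentially identical to the paper's: the bound $\|\whpi(a)\|\leq\|a\|$ is obtained from the seminorm \eqref{seminorm} together with the decomposition into cyclic subrepresentations of Remark~\ref{prp-dirsepp}, the extension to $\cA$ then follows by density, and nondegeneracy is deduced from $\whpi(1\otimes f)=\pi(f)$ via an approximate identity. You merely spell out details (the block-diagonal norm computation and the Dirac-sequence argument) that the paper leaves implicit, so the proposal is correct and follows the same route.
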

\begin{proof}


From \eqref{seminorm}
and  Remark \ref{prp-dirsepp}
it follows that 
$\|\whpi(a)\|\leq \|a\|$ for every $a\in\cA^\circ$. The existence and uniqueness of the extension 
$\whpi:\cA\to B(\ccH)$
now follows immediately. Nondegeneracy of 
$\whpi$ follows from the equality $\whpi(1\otimes f)=\pi(f)$ for $f\in\cD(G_\boldeps)$.
\end{proof}
We now give a construction of a unitary representation
of $(G,\g g)$ from a nondegenerate $*$-representation 
$
\widehat\pi:\cA\to B(\ccH)
$ of  $\cA$.
By \cite[Prop. VIII.1.11]{doranfell} and
\cite[Prop. VIII.1.12]{doranfell}, there exists a unique extension of $\whpi$ to a 
$*$-representation 
$\whpi:M(\cA)\to B(\ccH)$ of the multiplier algebra $M(\cA)$ satisfying
\begin{equation}
\label{whpilra=la}
\whpi\big((\lambda,\rho)\big)\whpi(a)=\whpi(\lambda(a))\ \text{ for }\ (\lambda,\rho)\in M(\cA)\text{ and } a\in\cA.
\end{equation}
Set
\begin{equation}
\label{eqpig=hatttt}
\pi(g):=\whpi\big((\lambda_g,\rho_g)\big)\text{ for every }g\in G_\boldeps.
\end{equation}
From \eqref{whpilra=la} it follows that the subspace $\ccH^\circ:=\whpi(\cA^\circ)\ccH$ is invariant under $\pi(g)$ for every $g\in G_\boldeps$. 
\begin{lem}
\label{piissmoothrp}
For every $v\in\ccH$,  
the map $G_\boldeps\to\ccH$, $g\mapsto \pi(g)v$ is smooth if and only if $v\in\ccH^\circ$. In particular, 
$(\pi,\ccH)$ is a smooth unitary representation of $G$.
\end{lem}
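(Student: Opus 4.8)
The plan is to show that each of the two conditions on $v$ is equivalent to $v$ being a smooth vector of the unitary representation $(\pi,\ccH)$ of $G_\boldeps$, so that the lemma reduces to the single identity $\ccH^\circ=\ccH^\infty$, where $\ccH^\circ=\whpi(\cA^\circ)\ccH$. First I would record that $\pi$ is genuinely a unitary representation: each $\pi(g)=\whpi\big((\lambda_g,\rho_g)\big)$ is unitary because $(\lambda_g,\rho_g)$ is a unitary multiplier, and $g\mapsto\pi(g)$ is a homomorphism since $g\mapsto(\lambda_g,\rho_g)$ is. I would also note at the outset that $\ccH^\circ$ is dense in $\ccH$: the representation $\whpi$ is nondegenerate and the image of $\cA^\circ$ is dense in $\cA$, so $\whpi(\cA^\circ)\ccH$ is dense in $\whpi(\cA)\ccH$, which is dense in $\ccH$.

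For the implication $v\in\ccH^\circ\Rightarrow$ smoothness (which also yields the ``in particular'' clause), I would take $v=\whpi(a)w$ with $a\in\cA^\circ$ and $w\in\ccH$, and use \eqref{whpilra=la} together with \eqref{eqpig=hatttt} to write $\pi(g)v=\whpi(\lambda_g(a))w$. Since $\lambda_g(a)=\eta_G(g)a$ and, by Proposition \ref{cAsmooth}, the orbit map $g\mapsto\eta_G(g)a$ is smooth $G\to\cA$ (extended to the second sheet $G\boldeps$ of $G_\boldeps$ by applying the same proposition to $\lambda_\boldeps(a)\in\cA^\circ$, using $\lambda_{g\boldeps}(a)=\eta_G(g)\lambda_\boldeps(a)$), composing with the bounded linear maps $\whpi$ and evaluation at $w$ shows that $g\mapsto\pi(g)v$ is smooth on $G_\boldeps$. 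In particular these orbit maps are continuous on the dense subspace $\ccH^\circ$; as the $\pi(g)$ are uniformly bounded, $\pi$ is strongly continuous, hence a smooth unitary representation of $G$, and $\ccH^\circ\subseteq\ccH^\infty$.

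For the reverse inclusion $\ccH^\infty\subseteq\ccH^\circ$, I would first identify $\whpi(1\otimes f)$ with the integrated operator $\pi(f)=\int_{G_\boldeps}f(g)\pi(g)\,dg$, which is now available since $\pi$ is strongly continuous. Applying $b\mapsto\whpi(b)w$ to the $\cA$-valued identity $\int_{G_\boldeps}f(g)\lambda_g(a)\,dg=(1\otimes f)a$ furnished by Lemma \ref{lem-gammafDh} (the integrand being $\gamma_{f,D,h}$ when $a=D\otimes h$), and using \eqref{whpilra=la}, gives $\pi(f)\whpi(a)w=\whpi(1\otimes f)\whpi(a)w$; by density of $\ccH^\circ$ and boundedness this yields $\whpi(1\otimes f)=\pi(f)$. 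Then the Dixmier--Malliavin theorem identifies $\ccH^\infty$ with the G\aa rding space $\mathrm{Span}\{\pi(f)v:f\in\cD(G_\boldeps),\,v\in\ccH\}=\whpi\big(1\otimes\cD(G_\boldeps)\big)\ccH\subseteq\whpi(\cA^\circ)\ccH=\ccH^\circ$. Combining the two inclusions gives $\ccH^\infty=\ccH^\circ$, which is exactly the asserted equivalence.

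The main obstacle is the forward inclusion $\ccH^\circ\subseteq\ccH^\infty$: establishing smoothness of the orbit maps of the vectors $\whpi(a)w$ rests entirely on Proposition \ref{cAsmooth}, and hence on the characterization of smoothing operators in \cite[Thm 2.11]{nsz}. Without it one cannot even conclude strong continuity of $\pi$, which is precisely what makes the integrated representation $\pi(f)$ and the subsequent Dixmier--Malliavin argument available. The reverse inclusion, by contrast, is essentially formal once the identity $\whpi(1\otimes f)=\pi(f)$ is in place.
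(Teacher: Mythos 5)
Your proposal is correct and follows essentially the same route as the paper: Proposition \ref{cAsmooth} (hence \cite[Thm 2.11]{nsz}) for the inclusion $\ccH^\circ\subseteq\ccH^\infty$, density of $\ccH^\circ$ for smoothness of $(\pi,\ccH)$, and the identity $\pi(f)=\whpi(1\otimes f)$ obtained from Lemma \ref{lem-gammafDh} together with the Dixmier--Malliavin theorem for the reverse inclusion. The only differences are cosmetic elaborations (explicitly checking unitarity of $\pi(g)$ and treating the component $G\boldsymbol{\varepsilon}$ separately), which the paper leaves implicit.
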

\begin{proof}
First we show that for every $v\in \ccH^\circ$, the orbit map
$G_\boldeps\to \ccH$, $g\mapsto \pi(g)v$ is smooth. 
Assume that $v=\whpi(a)w$ for $a\in\cA^\circ$ and $w\in\ccH$. Then  
\[
\pi(g)v=\whpi\big(\eta_G(g)a\big)w
,
\]
where $\eta_G(g):\cA\to\cA$ is defined in 
\eqref{eqEtaGga}. Since the map $\cA\to\ccH$, $a\mapsto \whpi(a)w$ is continuous and linear, Proposition 
\ref{cAsmooth}
 implies that the orbit map $g\mapsto \pi(g)v$ is smooth. 
 
Next we observe that $\ccH^\circ$ is a dense subspace of 
 $\ccH$, because  $\cA^\circ$ is a dense subspace of 
 $\cA$. Therefore the representation $(\pi,\ccH)$ is smooth.
 
Finally, we prove that every smooth vector of $(\pi,\ccH)$ belongs to $\ccH^\circ$. By the Dixmier--Malliavin Theorem, it is enough to show that 
\begin{equation}
\label{pifpihf}
\pi(f)=\whpi(1\otimes f)\text{ for every } f\in\cD(G_\boldeps),
\end{equation} 
where 
$
\pi(f)v:=\int_{G_\boldeps}f(g)\pi(g)v\, dg\ \text{ for }v\in\ccH
$. Since both sides of \eqref{pifpihf} are bounded operators and $\ccH^\circ$ is dense in $\ccH$, it is enough to prove that \[
\pi(f)\whpi(D\otimes h)v=\whpi\big((1\otimes f)(D\otimes h )\big)v\text{ for }D\otimes h\in\cA^\circ\text{ and }v\in\ccH.
\] 
Let  $\gamma_{f,D,h}^{}$ be defined as in
Lemma \ref{lem-gammafDh}. 
From \eqref{eqpig=hatttt} and \eqref{whpilra=la} it follows that
for every $v\in\ccH$,
\begin{align*}
\pi(f)\whpi(D\otimes h)v&=\int_{G_\boldeps}
f(g)\pi(g)\whpi(D\otimes h)v\, dg
\\
&=
\int_{G_\boldeps}\whpi\big(\gamma_{f,D,h}^{}(g)\big)v\, dg=\whpi\left(
\int_{G_\boldeps}\gamma_{f,D,h}^{}(g)dg\right)\!\!v,
\end{align*}
and from 
\eqref{gfDh}
it follows that 
$\pi(f)\whpi(D\otimes h)v=
\whpi\big((1\otimes f)(D\otimes h)\big)v$.
\end{proof}
Set \[
\pi^\circ(a):=\whpi(a)\big|_{\ccH^\circ}
\text{ for 
every }a\in\cA^\circ.
\]
From Lemma \ref{DMAA=A} and Lemma \ref{piissmoothrp} it follows that  $(\pi^\circ,\ccH^\circ)$ is a non-degenerate $*$-representation 
of $\cA^\circ$
in the sense defined in \cite[Def.\ IV.3.17]{doranfell}. Therefore by \cite[Prop. VIII.1.9]{doranfell} there exists a unique extension of $\pi^\circ$ to a $*$-representation 
$
\pi^\circ: M(\cA^\circ)\to\End_\C(\ccH^\circ)
$
satisfying 
\[
\pi^\circ\big((\lambda,\rho)\big)\pi^\circ(a)=\pi^\circ(\lambda(a))
\,\text{ for }\,(\lambda,\rho)\in M(\cA^\circ)
\text{ and }a\in\cA^\circ.
\] From the latter equality, Lemma \ref{DMAA=A}, and \eqref{whpilra=la}, it follows that 
$
\pi^\circ\big((\lambda_g,\rho_g)\big)=\pi(g)\big|_{\ccH^\circ}
$
for every $g\in G_\boldeps$.

For every $x\in\g g$, 
let
$(\lambda_x,\rho_x)\in M(\cA^\circ)$
be the multiplier  defined by
\[
\lambda_x(D\otimes f):=xD\otimes f\ \text{ and }\ 
\rho_x(D\otimes f)(g):=f(g)D\alpha_g(x)\,\text{ for }g\in G_\boldeps.
\]
It is straightforward to verify that $(\lambda_x,\rho_x)^*=(\lambda_{x^\dagger},\rho_{x^\dagger})$ for every $x\in\g g$, where $x^\dagger$ is defined as in 
\eqref{daggerrr}.
For every $x\in\g g$, 
we define a linear map
\begin{equation}
\label{rhotopix}
\rho^\pi(x):\ccH^\circ\to\ccH^\circ
\ ,\ 
v\mapsto \pi^\circ\big((\lambda_x,\rho_x)\big)v.
\end{equation}
Since 
$\pi(\boldeps)^2=\yek$, we obtain a $\Z/2\Z$-grading 
$\ccH=\ccH_\eev\oplus\ccH_\ood$ by the $\pm 1$ eigenspaces of $\pi(\boldeps)$, 
i.e.\ 
\[
\ccH_\eev:=\{v\in\ccH\,:\,\pi(\boldeps)v=v\}\text{ and }
\ccH_\ood:=\{v\in\ccH\,:\,\pi(\boldeps)v=-v\}.
\]
Since $\pi(\boldeps)$ leaves $\ccH^\circ$ invariant, the $\Z/2\Z$-grading of $\ccH$ induces a $\Z/2\Z$-grading  $\ccH^\circ=\ccH_\eev^\circ\oplus\ccH_\ood^\circ$ on $\ccH^\circ$. We now prove the following proposition.
\begin{prp}
\label{prp-isunGGg}
$(\pi,\rho^\pi,\ccH)$ is a unitary representation of $(G,\g g)$. 
\end{prp}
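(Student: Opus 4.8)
The plan is to verify the five conditions of Definition~\ref{defnungradgrad} one by one, exploiting throughout that $\pi^\circ$ is a $*$-representation of the multiplier algebra $M(\cA^\circ)$ on the pre-Hilbert space $\ccH^\circ=\whpi(\cA^\circ)\ccH$. In particular $\pi^\circ$ is multiplicative and involution-preserving, and, because $\ccH^\circ$ is spanned by $\pi^\circ(\cA^\circ)\ccH$, the operator $\pi^\circ\big((\lambda,\rho)\big)$ is determined by $\lambda$ alone through $\pi^\circ\big((\lambda,\rho)\big)\pi^\circ(a)=\pi^\circ(\lambda(a))$; recall also that $\pi^\circ\big((\lambda_g,\rho_g)\big)=\pi(g)|_{\ccH^\circ}$ and $\rho^\pi(x)=\pi^\circ\big((\lambda_x,\rho_x)\big)$. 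Condition~(i) is then almost immediate: the grading of $\ccH$ is the one produced by $\pi(\boldeps)$, smoothness of $(\pi,\ccH)$ together with $\ccH^\infty=\ccH^\circ$ is Lemma~\ref{piissmoothrp}, and since $G_\boldeps=G\times\{\yek,\boldeps\}$ each $\pi(g)$ with $g\in G$ commutes with $\pi(\boldeps)$ and hence preserves its eigenspaces.

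For condition~(v) I would compute the conjugated multiplier. Using the multiplication rule of $M(\cA^\circ)$ and the explicit formula \eqref{eq-lamgrhog}, a short calculation shows that the $\lambda$-component of $(\lambda_g,\rho_g)(\lambda_x,\rho_x)(\lambda_{g^{-1}},\rho_{g^{-1}})$ sends $D\otimes f$ to $\alpha_g(x)D\otimes f=(\Ad(g)x)D\otimes f$, so it equals $\lambda_{\Ad(g)x}$. As the operator depends only on the $\lambda$-part, this yields $\pi(g)\rho^\pi(x)\pi(g)^{-1}=\rho^\pi(\Ad(g)x)$ for every $g\in G_\boldeps$. Specializing $g=\boldeps$ and recalling $\Ad(\boldeps)x=(-1)^{|x|}x$ shows that $\rho^\pi(x)$ is homogeneous of parity $|x|$ for the grading of $\ccH^\circ$, which is the parity compatibility needed below. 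For condition~(ii) the same multiplication rule gives $\lambda_x\lambda_y=\lambda_{xy}$ (left multiplication by $xy\in\bfU(\g g_\C)$), so $\rho^\pi(x)\rho^\pi(y)=\pi^\circ\big((\lambda_x,\rho_x)(\lambda_y,\rho_y)\big)$ is implemented by left multiplication by $xy$; forming the super-commutator and using $xy-(-1)^{|x||y|}yx=[x,y]$ in $\bfU(\g g_\C)$ turns it into $\rho^\pi([x,y])$, so $\rho^\pi$ is a homomorphism of Lie superalgebras.

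Condition~(iv) follows from the identity $(\lambda_x,\rho_x)^*=(\lambda_{x^\dagger},\rho_{x^\dagger})$ recorded above. For $x\in\g g_\ood$ we have $x^\dagger=e^{-\frac{\pi i}{2}}x=-ix$ by \eqref{daggerrr}, so $(\lambda_x,\rho_x)^*=-i(\lambda_x,\rho_x)$. Since $\pi^\circ$ preserves the involution on the domain $\ccH^\circ$, for $v,w\in\ccH^\circ$ one obtains $\lag\rho^\pi(x)v,w\rag=\lag v,\pi^\circ\big((\lambda_x,\rho_x)^*\big)w\rag=\lag v,-i\rho^\pi(x)w\rag$, which is exactly the inclusion $-i\rho^\pi(x)\subseteq\rho^\pi(x)^*$.

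The main obstacle is condition~(iii), matching $\rho^\pi(x)$ with $\dd\pi(x)$ for $x\in\g g_\eev$. For $v=\whpi(D\otimes f)w\in\ccH^\circ$ we have $\pi(e^{tx})v=\whpi\big(\lambda_{e^{tx}}(D\otimes f)\big)w$, and by Proposition~\ref{cAsmooth} the curve $t\mapsto\lambda_{e^{tx}}(D\otimes f)$ is smooth in $\cA$; differentiating at $t=0$ and using boundedness of $\whpi$ gives $\dd\pi(x)v=\whpi\big((\mathrm{ad}\,x)(D)\otimes f+D\otimes\sfR_x f\big)w$. The delicate point is that this element is \emph{not} equal to $\lambda_x(D\otimes f)=xD\otimes f$ in $\cA^\circ$; equality holds only modulo $\cA^\circ_-$. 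To close the gap I would establish the relation $Dx\otimes f=D\otimes\sfR_x f$ in $\cA$ by evaluating both sides in each generating cyclic representation, where $\whpi(Dx\otimes f)=\rho^\pi(D)\dd\pi(x)\pi(f)=\rho^\pi(D)\pi(\sfR_x f)=\whpi(D\otimes\sfR_x f)$ by \eqref{ddpesss} together with Definition~\ref{defnungradgrad}(iii); hence the difference lies in $\cA^\circ_-$ and vanishes in $\cA$. Combining this with $(\mathrm{ad}\,x)(D)=xD-Dx$ collapses the expression to $\whpi(xD\otimes f)w=\rho^\pi(x)v$, completing~(iii). Assembling (i)--(v) proves the proposition.
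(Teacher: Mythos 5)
Your treatment of conditions (i), (ii), (iv) and (v) coincides with the paper's: both arguments go through the extension of $\whpi$ to $M(\cA^\circ)$, the identity $(\lambda_g,\rho_g)(\lambda_x,\rho_x)(\lambda_{g^{-1}},\rho_{g^{-1}})=(\lambda_{\Ad(g)x},\rho_{\Ad(g)x})$, the relation $(\lambda_x,\rho_x)^*=(\lambda_{x^\dagger},\rho_{x^\dagger})$, and the specialization $g=\boldeps$ to get the parity of $\rho^\pi(x)$. Where you genuinely diverge is condition (iii), which is the bulk of the paper's proof. The paper forms $a_t:=\frac1t(\lambda_{e^{tx}}(a)-a)-\lambda_x(a)\in\cA^\circ$ and shows $\|a_t\|\to 0$ directly, by writing Taylor's theorem for $t\mapsto\sigma(e^{tx})\rho^\sigma(D)\sigma(f)v$ in each cyclic representation $(\sigma,\rho^\sigma,\ccK)$ and bounding the second-order remainder $\rho^\sigma(x^2D)\sigma(f)$ uniformly over all such representations via Proposition \ref{fDincAbdd}; it never needs to name the derivative of the orbit map. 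You instead invoke Proposition \ref{cAsmooth} for the existence of the derivative of $t\mapsto\lambda_{e^{tx}}(D\otimes f)$ in $\cA$, identify it as $(\mathrm{ad}\,x)(D)\otimes f+D\otimes\sfR_xf$, and then prove the identity $Dx\otimes f=D\otimes\sfR_xf$ in $\cA$ by evaluating both sides in every cyclic representation. Both routes ultimately rest on the same fact, namely that $\rho^\sigma(x)=\dd\sigma(x)$ holds in each of the representations defining the seminorm of $\cA$; yours makes it more transparent why $\lambda_x$ is the correct multiplier, while the paper's avoids any computation of derivatives inside $\cA$.

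The one step you should not leave implicit is the identification of the derivative: Proposition \ref{cAsmooth} gives smoothness of the orbit map but does not compute its derivative, so you must check that the difference quotient $\frac1t\left(\alpha_{e^{tx}}(D)\otimes\LL_{e^{tx}}f-D\otimes f\right)$ converges in the $\cA$-norm to $(\mathrm{ad}\,x)(D)\otimes f+D\otimes\sfR_xf$. This requires the same kind of estimate as in Lemma \ref{lem-gammafDh}, i.e.\ a reduction to $L^1$-convergence of $\sfR_{D'}$-derivatives that is uniform over the cyclic representations by Proposition \ref{fDincAbdd}; alternatively you can sidestep the computation entirely by evaluating the derivative in the isometric universal representation $\whpi_u$, where it is visibly $\rho^{\pi_u}(xD)\pi_u(f)=\whpi_u(xD\otimes f)$. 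Either patch is routine, so this is an omission to be filled in rather than a genuine gap.
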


\begin{proof}
Every $g\in G$ commutes with $\boldeps$, and therefore 
$\pi(g)$ preserves the $\Z/2\Z$-grading of $\ccH$.
If $x\in\g g_\ood$, then $(\lambda_x,\rho_x)^*=(\rho_{-ix},\lambda_{-ix})$ in 
$M(\cA^\circ)$, and it follows that the operator $e^{-\frac{\pi i}{4}}\rho^\pi(x)$ is symmetric.
For every $x\in\g g$ and $g\in G_\boldeps$, we have
\[
(\lambda_g,\rho_g)(\lambda_x,\rho_x)(\lambda_{g^{-1}},\rho_{g^{-1}})=
(\lambda_{\Ad(g)x},\rho_{\Ad(g)x}),
\]
and consequently,
\begin{equation}
\label{adjAD}
\pi(g)\rho^\pi(x)\pi(g)^{-1}=\rho^\pi(\Ad(g)x).
\end{equation}
In particular, 
from \eqref{adjAD} for
$g=\boldeps$ it follows that  $\rho^\pi(x)\in\End_\C(\ccH^\circ)_\eev$ 
for $x\in\g g_\eev$ and 
$\rho^\pi(x)\in\End_\C(\ccH^\circ)_\ood$
for $x\in\g g_\ood$. 
The relation
$
\rho^\pi([x,y])=[\rho^\pi(x),\rho^\pi(y)]
$
for $x,y\in\g g$
follows from the corresponding relation in the multiplier algebra $M(\cA^\circ)$.
Finally, we prove that $\rho^\pi(x)={\dd\pi}(x)\big|_{\ccH^\circ}$ for every $x\in\g g_\eev$. Fix $a\in\cA^\circ$ and $v\in\ccH^\circ$, and set 
\[
\phi_{a,t}:=\frac{1}{t}
\big(\pi(e^{tx})\pi^\circ(a)v-
\pi^\circ(a)v\big)-\rho^\pi(x)\pi^\circ(a)v\in\ccH.
\]
Then $
\phi_{a,t}=\pi^\circ
(a_t)
v$, where 
$a_t:=\frac{1}{t}
(\lambda_{e^{tx}}(a)-a)-\lambda_x(a)\in\cA^\circ$.
To complete the proof, we need to show that $\lim_{t\to 0}\|\phi_{a,t}\|=0$. But 
\[
\|\phi_{a,t}\|=\|\pi^\circ(a_t)v\|=\|\whpi(a_t)v\|\leq \|a_t\|\cdot\|v\|
\]
and therefore it suffices to show that $\lim_{t\to 0}\|a_t\|=0$. Without loss of generality we can assume that $a=D\otimes f$. From the definition of the norm of $\cA$ we obtain
\[
\|a_t\|=\sup_{(\sigma,\rho^\sigma,\mathscr{K})}
\left\|
\frac{1}{t}\left(
\sigma(e^{tx})\rho^\sigma(D)\sigma(f)-\rho^\sigma(D)
\sigma(f)\right)-\rho^\sigma(xD)\sigma(f)
\right\|,
\] 
where the supremum is taken over all unitary equivalence classes of cyclic unitary representations 
$(\sigma,\rho^\sigma,\mathscr{K})$
of $(G,\g g)$.
Now fix a unitary representation $(\sigma,\rho^\sigma,\mathscr{K})$ and a vector $v\in\ccH_\sigma$ such that 
$\|v\|=1$.  By Taylor's Theorem,
\begin{align*}
\sigma(e^{tx})\rho^\sigma(D)\sigma(f)v&=
\rho^\sigma(D)\sigma(f)v+t\rho^\sigma(xD)\sigma(f)v\\
&+
\frac12\int_0^t(t-s)\sigma(e^{sx})\rho^\sigma(x^2D)\sigma(f)v\,ds.
\end{align*}
By Proposition \ref{fDincAbdd}, there exists a constant $M>0$, independent of $(\sigma,\rho^\sigma,\mathscr K)$, such that 
\[
\|\rho^\sigma(x^2D)\sigma(f)\|\leq M.
\] It follows that
$\|a_t\|\leq \frac{1}{2}M\cdot |t|$, and consequently
$\lim_{t\to 0}\|a_t\|=0$.
\end{proof}

\begin{thm}
\label{thm-isomcat}
The correspondences of
Propositions \ref{prop-pirhotowhpi} and \ref{prp-isunGGg} result in an isomorphism between the category of unitary representations of $(G,\g g)$ and the category of nondegenerate $*$-representations of $\cA=\cA(G,\g g)$.\end{thm}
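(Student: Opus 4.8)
The plan is to verify that the assignment $(\pi,\rho^\pi,\ccH)\mapsto\whpi$ of Proposition~\ref{prop-pirhotowhpi} (call it $F$) and the assignment $\whpi\mapsto(\pi,\rho^\pi,\ccH)$ of Proposition~\ref{prp-isunGGg} (call it $\Psi$) are mutually inverse and respect morphisms. I would first fix the morphisms on each side: a morphism of unitary representations is an even bounded operator $T:\ccH_1\to\ccH_2$ with $T\pi_1(g)=\pi_2(g)T$ for $g\in G$ and $T\rho^{\pi_1}(x)=\rho^{\pi_2}(x)T$ on smooth vectors for $x\in\g g$ (note $T$ then automatically preserves smooth vectors), while a morphism of nondegenerate $*$-representations is a bounded $T$ with $T\whpi_1(a)=\whpi_2(a)T$ for all $a\in\cA$. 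Since both $F$ and $\Psi$ leave the underlying Hilbert space untouched and send each intertwiner to itself as a bounded operator, it suffices to establish three facts: that $\Psi\circ F$ and $F\circ\Psi$ are the identity on objects, and that a bounded $T$ intertwines two unitary representations precisely when it intertwines the associated $*$-representations.

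For $\Psi\circ F$, I would start from $(\pi,\rho^\pi,\ccH)$, form $\whpi=F(\pi,\rho^\pi,\ccH)$, and check that the reconstructed data agree with the originals. Using \eqref{whpilra=la} with $a=D\otimes f$ one has $\whpi(\lambda_g(a))=\rho^\pi(\alpha_g(D))\pi(\LL_gf)=\pi(g)\rho^\pi(D)\pi(f)=\pi(g)\whpi(a)$, so density of $\whpi(\cA^\circ)\ccH$ forces the reconstructed group action $\whpi((\lambda_g,\rho_g))$ to equal $\pi(g)$. Lemma~\ref{piissmoothrp} then identifies $\ccH^\circ=\whpi(\cA^\circ)\ccH$ with the original $\ccH^\infty$, and the same computation with $(\lambda_x,\rho_x)$ in place of $(\lambda_g,\rho_g)$, combined with \eqref{rhotopix}, shows that the reconstructed $\rho^\pi(x)$ agrees with the original on $\ccH^\infty$. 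For $F\circ\Psi$, I would start from a nondegenerate $\whpi$ and set $(\pi,\rho^\pi,\ccH)=\Psi(\whpi)$; identity \eqref{pifpihf} gives $\pi(f)=\whpi(1\otimes f)$, and applying the multipliers $(\lambda_{x_i},\rho_{x_i})$ successively (using $\lambda_x\lambda_y=\lambda_{xy}$ and multiplicativity of $\pi^\circ$) yields $\rho^\pi(D)\pi(f)=\whpi(D\otimes f)$ for every $D\otimes f\in\cA^\circ$, where $D\in\bfU(\g g_\C)$. Hence $F(\Psi(\whpi))=\whpi$ on the dense subalgebra $\cA^\circ$, and therefore on all of $\cA$.

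For the morphism equivalence, the forward direction is immediate: if $T$ intertwines the unitary representations then, since $\pi_1(f)\ccH_1\subseteq\ccH_1^\infty$ and $T$ preserves smooth vectors,
\[
T\whpi_1(D\otimes f)=T\rho^{\pi_1}(D)\pi_1(f)=\rho^{\pi_2}(D)\pi_2(f)T=\whpi_2(D\otimes f)T .
\]
Conversely, if $T\whpi_1(a)=\whpi_2(a)T$ for all $a\in\cA$, I would extend to $M(\cA)$ via \eqref{whpilra=la} and use nondegeneracy to get $T\whpi_1((\lambda,\rho))=\whpi_2((\lambda,\rho))T$ for every multiplier; taking $(\lambda,\rho)=(\lambda_g,\rho_g)$ recovers $T\pi_1(g)=\pi_2(g)T$, and $(\lambda,\rho)=(\lambda_\boldeps,\rho_\boldeps)$ shows $T$ is even. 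Finally $T$ maps $\ccH_1^\circ=\whpi_1(\cA^\circ)\ccH_1$ into $\ccH_2^\circ$, and running the same argument through the extension $\pi^\circ$ to $M(\cA^\circ)$ with the multiplier $(\lambda_x,\rho_x)$ gives $T\rho^{\pi_1}(x)=\rho^{\pi_2}(x)T$ on $\ccH_1^\circ=\ccH_1^\infty$.

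Since the analytic heart of the matter, namely $\whpi(\cA^\circ)\ccH=\ccH^\infty$ and the smoothness of orbit maps, has already been absorbed into Lemma~\ref{piissmoothrp} and Propositions~\ref{prop-pirhotowhpi}--\ref{prp-isunGGg}, I do not expect a genuine obstacle at this stage. The one point that demands care is the backward direction of the morphism equivalence, where one must pass through \emph{two} distinct multiplier algebras, $M(\cA)$ for the group action and $M(\cA^\circ)$ for $\rho^\pi$, and invoke nondegeneracy and density separately at each stage in order to transport the intertwining relation to the reconstructed data.
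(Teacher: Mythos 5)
Your proposal is correct and follows essentially the same route as the paper's proof: the same two-step structure (mutual inverses on objects via the covariance computation with $(\lambda_g,\rho_g)$ and $(\lambda_x,\rho_x)$, then the identification $\whpi(\cA^\circ)\ccH=\ccH^\infty$; morphism equivalence by passing first through $M(\cA)$ and then through $M(\cA^\circ)$ with $\cA^\circ=\cA^\circ\cA^\circ$). The only cosmetic difference is that you cite \eqref{pifpihf} from Lemma~\ref{piissmoothrp} where the paper re-derives it via $\gamma_{f,D,h}$, which is the same content.
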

\begin{proof}
\textbf{Step 1.}
First we  verify that the correspondences of 
Proposition \ref{prop-pirhotowhpi} and Proposition 
\ref{prp-isunGGg}
are mutual inverses. Let $(\pi,\rho^\pi,\ccH)$ be a unitary representation of $(G,\g g)$. Let $\whpi$ be the $*$-representation of $\cA$ constructed by Proposition \ref{prop-pirhotowhpi}, and 
$(\oline\pi,\rho^{\oline \pi},\ccH)$ be the unitary representation of $(G,\g g)$ constructed from $\whpi$ by 
Proposition 
\ref{prp-isunGGg}.
For $D\otimes f\in\cA^\circ$ and $g\in G_\boldeps$,
\begin{align*}
\oline\pi(g)\whpi(D\otimes f)&=\whpi(\lambda_g(D\otimes f))
=\whpi(\alpha_g(D)\otimes \LL_gf)\\
&=
\rho^\pi(\alpha_g(D))\pi(\LL_gf)=\pi(g)\rho^\pi(D)\pi(f)
=\pi(g)\whpi(D\otimes f).
\end{align*}
Since $\pi(g)$ and $\oline\pi(g)$ are bounded operators and
$\whpi$ is nondegenerate, we obtain $\pi(g)=\oline\pi(g)$ for $g\in G_\boldeps$. Let $\ccH^\infty$ denote the space of smooth vectors of $(\pi,\ccH)$.
For $x\in\g g$, $D\otimes f\in\cA^\circ$, and $w\in\ccH$,
\begin{align}
\label{pibarrho=pi}
\rho^{\oline\pi}(x)\whpi(D\otimes f)w
&=
\whpi(xD\otimes f)w=\rho^\pi(xD)\pi(f)w\\ \notag
&=\rho^\pi(x)\rho^\pi(D)\pi(f)w=\rho^\pi(x)\whpi(D\otimes f)w.
\end{align}
By the Dixmier--Malliavin Theorem,  $\ccH^\infty=\whpi(\cA^\circ)\ccH$. Therefore 
\eqref{pibarrho=pi} implies that $\rho^{\oline\pi}(x)=\rho^\pi(x)$
for every $x\in \g g$.

Conversely, let $\whpi:\cA\to B(\ccH)$ be a nondegenerate $*$-representation. Let $(\pi,\rho^\pi,\ccH)$ be the unitary representation of $(G,\g g)$ corresponding to $\whpi$ by Proposition \ref{prp-isunGGg}, and let $\whpi':\cA\to B(\ccH)$ be the 
$*$-representation corresponding to $(\pi,\rho^\pi,\ccH)$ by Proposition \ref{prop-pirhotowhpi}.
For  $D_1\otimes h_1\in\cA^\circ$, and $w\in\ccH$, we obtain 
by Lemma~\ref{lem-gammafDh} that
\begin{align}
\label{eq-auxx}
\pi(f)\whpi(D_1\otimes h_1)w
&=
\int_{G_\boldeps}f(g)\pi(g)\whpi(D_1\otimes h_1)wdg \\
&=
\int_{G_\boldeps} \hat\pi(\gamma_{f, D_1, h_1}(g))w\, dg \notag \\ 
&=\whpi\left(
\int_{G_\boldeps}\gamma_{f,D_1,h_1}^{}(g)dg\right)w=\whpi\big((1\otimes f)(D_1\otimes h_1)\big)w.\notag
\end{align}
Now set $a:=D_1\otimes h_1$. From \eqref{eq-auxx} it follows that for every $D\otimes f\in\cA^\circ$,
\begin{align*}
\whpi'(D\otimes f)\whpi(a)w&=\rho^\pi(D)\pi(f)\whpi(a)w\\
&=\rho^\pi(D)\whpi\big((1\otimes f)a\big)w
=\whpi(D\otimes f)\whpi(a)w.
\end{align*}
Nondegeneracy of $\whpi$ and boundedness of 
the operators 
$\whpi'(D\otimes f)$ and $\whpi(D\otimes f)$
 imply that $\whpi'(D\otimes f)=\whpi(D\otimes f)$. Since $\cA^\circ$ is dense in $\cA$, the equality $\whpi'(a)=\whpi(a)$ holds for every $a\in\cA$.\\

\textbf{Step 2.} 
To complete the proof, we need to show that the correspondences of 
Proposition \ref{prop-pirhotowhpi} and Proposition 
\ref{prp-isunGGg} are compatible with morphisms in the two categories.
Suppose that $(\pi,\rho^\pi,\ccH)$ and $(\sigma,\rho^\sigma,\ccK)$
are two unitary representations of $(G,\g g)$, and  let $\whpi:\cA\to B(\ccH)$ and $\whsi:\cA\to B(\ccK)$ be the  $*$-representations of $\cA$ constructed from
$(\pi,\rho^\pi,\ccH)$ and $(\sigma,\rho^\sigma,\ccK)$
by Proposition \ref{prop-pirhotowhpi}.
 If $T:\ccH\to \ccK$ is a $(G,\g g)$-intertwining operator, then it is easy to verify that $T$ commutes with the action of $\cA^\circ$ on $\ccH$ and $\ccK$, and therefore by a continuity argument, $T$  commutes with the action of $\cA$ on $\ccH$ and $\ccK$ as well.

Conversely, assume that $T:\ccH\to\ccK$ commutes with the actions of $\cA$ on $\ccH$ and $\ccK$. First note that
for every $a\in\cA$ and every $(\lambda,\rho)\in M(\cA)$, 
\begin{align*}
T\whpi\big((\lambda,\rho)\big)\whpi(a)
&=
T\whpi(\lambda(a))=
\whsi(\lambda(a))T=\whsi\big((\lambda,\rho)\big)\whsi(a)T=
\whsi\big((\lambda,\rho)\big)T\whpi(a).
\end{align*}
Since $\whpi(\cA)\ccH$ is a dense subspace of $\ccH$, it follows that
\[
T\whpi\big((\lambda,\rho)\big)=\whsi\big((\lambda,\rho)\big)T\text{ for every }
(\lambda,\rho)\in M(\cA).
\]
Setting $(\lambda,\rho):=(\lambda_g,\rho_g)$ in the last relation, we obtain 
\begin{equation}
\label{TpigsgTT}
T\pi(g)=\sigma(g)T\text{ for every }g\in G_\boldeps,
\end{equation} 
and in particular $T\ccH^\infty\subseteq\ccK^\infty$. 
From \eqref{TpigsgTT} for $g=\boldeps$, it follows that $T$ preserves the $\Z/2\Z$-grading of $\ccH$. 
Now for $(\lambda,\rho)\in M(\cA^\circ)$, $a\in\cA^\circ$, and $v\in\ccH^\infty$, using Lemma 
\ref{piissmoothrp} we obtain that 
\begin{align*}
T\pi^\circ(\lambda,\rho)\pi^\circ(a)v&=T\pi^\circ(\lambda(a))v
=
\sigma^\circ(\lambda(a))Tv\\ &
=
\sigma^\circ(\lambda,\rho)\sigma^\circ(a)Tv
=
\sigma^\circ(\lambda,\rho)T\pi^\circ(a)v.
\end{align*}
Thus  
Lemma 
\ref{piissmoothrp}
and 
Lemma \ref{DMAA=A} imply that $T\pi^\circ(\lambda,\rho)w=\sigma^\circ(\lambda,\rho)Tw$ for every $w\in\ccH^\infty$. Setting $(\lambda,\rho):=(\lambda_x,\rho_x)$ for $x\in\g g$, we obtain 
$T\rho^\pi(x)=\rho^\sigma(x)T$. Therefore $T$ is a $(G,\g g)$-intertwining map 
from $(\pi,\rho^\pi,\ccH)$ to $(\sigma,\rho^\sigma,\ccK)$.
\end{proof}

\section{Unique direct integral decompositions}
\label{sec-unique}
For a unitary representation $(\pi,\rho^\pi,\ccH)$ of a Lie supergroup $(G,\g g)$, it is desirable to have a decomposition as a direct integral of irreducible unitary representations. From Theorem \ref{thm-isomcat} it follows that the problem of existence and uniqueness of such a direct integral decomposition can be reduced to the same problem for the associated $C^*$-algebra $\cA=\cA(G,\g g)$. 

In this section we prove that 
 existence and uniqueness of direct integral decompositions hold for two general classes of Lie supergroups, which include nilpotent and basic classical Lie supergroups.

Recall that 
a $C^*$-algebra $\cA$ is called \emph{CCR} if 
$\whpi(\cA)\sseq K(\ccH)$ 
for every 
irreducible $*$-representation $\whpi:\cA\to B(\ccH)$,
where $K(\ccH)\sseq B(\ccH)$ denotes the subspace of compact operators. It is well known that for $C^*$-algebras which are CCR, existence and uniqueness of direct integral decompositions hold.

A unitary representation $(\pi,\ccH)$ of a Lie group $G$ is called \emph{completely continuous} if 
$\pi(f)\in K(\ccH)$ for every $f\in\cD(G)$.

\begin{thm}
\label{thmCCR}
Let $(G,\g g)$ be a Lie supergroup such that
for every irreducible unitary representation $(\pi,\rho^\pi,\ccH)$ of $(G,\g g)$, the unitary representation $(\pi,\ccH)$ of $G$ is
completely continuous.
Then the $C^*$-algebra $\cA=\cA(G,\g g)$ is CCR. 
\end{thm}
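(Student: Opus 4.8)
The plan is to use the categorical isomorphism of Theorem~\ref{thm-isomcat} to transfer the complete continuity hypothesis to the representation theory of $\cA$, and then to propagate compactness from the ``group part'' $1\otimes f$ to all of $\cA$ using the factorization behind Lemma~\ref{DMAA=A} together with the fact that $K(\ccH)$ is a closed two-sided ideal of $B(\ccH)$.

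First I would fix an irreducible $*$-representation $\whpi:\cA\to B(\ccH)$. Such a representation is automatically nondegenerate, since its essential subspace $\oline{\whpi(\cA)\ccH}$ is a nonzero closed invariant subspace and hence equal to $\ccH$; thus Theorem~\ref{thm-isomcat} associates to $\whpi$ a unitary representation $(\pi,\rho^\pi,\ccH)$ of $(G,\g g)$. The crucial point is that this correspondence is an \emph{isomorphism of categories}, so it identifies endomorphism algebras: the bounded operators commuting with $\whpi(\cA)$ are exactly the $(G,\g g)$-intertwining endomorphisms of $(\pi,\rho^\pi,\ccH)$ (this is precisely the content of Step~2 in the proof of Theorem~\ref{thm-isomcat}). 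Consequently $\whpi$ is irreducible if and only if $(\pi,\rho^\pi,\ccH)$ is irreducible, and the hypothesis of the theorem then guarantees that $(\pi,\ccH)$ is completely continuous.

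Next I would upgrade complete continuity from $G$ to $G_\boldeps=G\times\{\yek,\boldeps\}$. Writing $f\in\cD(G_\boldeps)$ as the sum of its restrictions to the two cosets yields $\pi(f)=\pi(f_0)+\pi(f_1)\pi(\boldeps)$ with $f_0,f_1\in\cD(G)$; since $\pi(f_0),\pi(f_1)\in K(\ccH)$ by complete continuity and $\pi(\boldeps)$ is bounded, we get $\pi(f)\in K(\ccH)$ for every $f\in\cD(G_\boldeps)$. Combined with the identity $\pi(f)=\whpi(1\otimes f)$ from \eqref{pifpihf}, this shows $\whpi(1\otimes f)\in K(\ccH)$ for all $f\in\cD(G_\boldeps)$. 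To reach all of $\cA$ it suffices, since $K(\ccH)$ is closed and $\cA^\circ$ is dense in $\cA$, to prove $\whpi(\cA^\circ)\sseq K(\ccH)$, and since $\cA^\circ$ is spanned by elements $D\otimes f$ it suffices to treat those. Here I would invoke the factorization from the proof of Lemma~\ref{DMAA=A}: by the Dixmier--Malliavin Theorem $f=\sum_i f_i\star h_i$, whence $D\otimes f=\sum_i(D\otimes f_i)(1\otimes h_i)$ and therefore
\[
\whpi(D\otimes f)=\sum_i\whpi(D\otimes f_i)\,\pi(h_i).
\]
Each $\pi(h_i)$ is compact while each $\whpi(D\otimes f_i)$ is bounded, so the ideal property of $K(\ccH)$ forces every summand, hence the whole finite sum, into $K(\ccH)$. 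Passing to the closure gives $\whpi(\cA)\sseq K(\ccH)$, so $\cA$ is CCR. I expect the only genuinely delicate point to be the matching of commutants (equivalently, the preservation of irreducibility) under Theorem~\ref{thm-isomcat}; once that is in hand, the compactness propagation is a formal consequence of the ideal structure of $K(\ccH)$ and the factorization supplied by Lemma~\ref{DMAA=A}.
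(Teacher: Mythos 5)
Your proposal is correct and follows essentially the same route as the paper's proof: transfer irreducibility via Theorem~\ref{thm-isomcat}, factor $f=\sum_i f_i\star h_i$ by Dixmier--Malliavin so that $\whpi(D\otimes f)=\sum_i\whpi(D\otimes f_i)\pi(h_i)$, and conclude by the ideal property of $K(\ccH)$. You even make explicit two small points the paper leaves implicit, namely the matching of commutants under the categorical isomorphism and the passage from complete continuity over $G$ to over $G_\boldeps$ via $\pi(f)=\pi(f_0)+\pi(f_1)\pi(\boldeps)$.
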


\begin{proof}
Let $\whpi:\cA\to B(\ccH)$ be an irreducible 
$*$-representation of $\cA$. Since $K(\ccH)$
is a closed ideal of $B(\ccH)$ and 
$\|\whpi(a)\|\leq\|a\|$ for every $a\in\cA$,  
it suffices to prove that $\whpi(D\otimes f)\in K(\ccH)$ for every $D\otimes f\in \cA^\circ$. 
Let $(\pi,\rho^\pi,\ccH)$ be the unitary representation of  $(G,\g g)$ that corresponds to $\whpi$. 
Theorem \ref{thm-isomcat} implies that 
$(\pi,\rho^\pi,\ccH)$
is irreducible.
The Dixmier--Malliavin Theorem implies that there exist 
$f_1,\cdots,f_r,h_1,\ldots,h_r
\in\cD(G_\boldeps)$ such that
$f=\sum_{i=1}^rf_i\star h_i$.
Thus 
\[
\whpi(D\otimes f)=\sum_{i=1}^r
\whpi(D\otimes f_i)\whpi(1\otimes h_i)
=
\sum_{i=1}^r\whpi(D\otimes f_i)\pi(h_i).
\] 
From the assumption of the theorem it follows that  $\pi(h_i)\in K(\ccH)$ for  $1\leq i\leq r$. Consequently, $\whpi(D\otimes f)\in K(\ccH)$.
\end{proof}

As in  
\cite{SalCMP} or \cite[Sec.\ 8]{nsLNM},
a Lie supergroup $(G,\g g)$ is called \emph{nilpotent} if $\g g$ is a nilpotent Lie superalgebra.
\begin{thm}
Let $(G,\g g)$ be a connected nilpotent Lie supergroup. Then the $C^*$-algebra $\cA=\cA(G,\g g)$ is CCR.
\end{thm}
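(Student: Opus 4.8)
The plan is to invoke Theorem \ref{thmCCR} and reduce the problem to showing that every irreducible unitary representation of a connected nilpotent Lie supergroup is completely continuous. By Theorem \ref{thmCCR}, it suffices to prove that for each irreducible unitary representation $(\pi,\rho^\pi,\ccH)$ of $(G,\g g)$, the underlying unitary representation $(\pi,\ccH)$ of $G$ satisfies $\pi(f)\in K(\ccH)$ for every $f\in\cD(G)$. Thus the entire content of the theorem lies in establishing complete continuity of $(\pi,\ccH)$.

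First I would analyze the structure of an irreducible unitary representation of a nilpotent Lie supergroup, drawing on the orbit-method classification in \cite{SalCMP} (see also \cite[Sec.\ 8]{nsLNM}). The key point is that an irreducible unitary representation $(\pi,\rho^\pi,\ccH)$ is induced (up to the finite-dimensional Clifford factor accounting for the odd part) from a unitary character of a suitable subgroup attached to a coadjoint orbit of the even part $\g g_\eev$. Consequently, the even representation $(\pi,\ccH)$ of $G$ decomposes, via this induction data, into pieces each of which is essentially an irreducible unitary representation of the connected nilpotent Lie group $G$ itself, tensored with a finite-dimensional space. Since $\g g_\eev$ is the nilpotent Lie algebra of $G$, the classical theory of Kirillov applies to $G$.

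The crucial classical input I would cite is that every irreducible unitary representation of a connected nilpotent Lie group $G$ is completely continuous: indeed, by Kirillov theory such a representation is realized on an $L^2$-space over a Euclidean space where $\pi(f)$ acts as an integral operator with a Schwartz (rapidly decreasing, smooth) kernel for $f\in\cD(G)$, hence as a Hilbert--Schmidt and in particular compact operator. Therefore every irreducible $(\pi,\ccH)$ arising from an irreducible representation of $(G,\g g)$ is a finite multiple of such a completely continuous representation of $G$ (the finite multiplicity coming from the Clifford module of the odd generators), and a finite direct sum of completely continuous representations is completely continuous. This yields $\pi(f)\in K(\ccH)$ for all $f\in\cD(G)$.

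The main obstacle I anticipate is verifying carefully that passing from an irreducible unitary representation of the \emph{supergroup} $(G,\g g)$ to the underlying even representation of $G$ does not destroy complete continuity, and in particular that the odd part contributes only a \emph{finite-dimensional} factor rather than an infinite multiplicity. This requires the structural description from \cite{SalCMP}: the odd generators act through a finite-dimensional Clifford algebra module built on a symplectic (under the superbracket $[\cdot,\cdot]:\g g_\ood\times\g g_\ood\to\g g_\eev$ composed with evaluation at the orbit functional) quotient of $\g g_\ood$, whose dimension is bounded by $\dim\g g_\ood<\infty$. Once this finiteness is in hand, the reduction to the completely continuous nilpotent group case is routine and the conclusion follows immediately from Theorem \ref{thmCCR}.
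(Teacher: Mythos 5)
Your proposal follows essentially the same route as the paper: reduce via Theorem \ref{thmCCR} to complete continuity of $(\pi,\ccH)$, use the structure theory of \cite{SalCMP} (the paper cites its Cor.~6.1.1 directly) to see that $\pi|_G$ is a finite direct sum of irreducible representations of $G$, and invoke the fact that connected nilpotent Lie groups are CCR (the paper cites Fell, you sketch the Kirillov-theoretic kernel argument). The reasoning is correct and matches the paper's proof.
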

\begin{proof}
From \cite[Cor. 6.1.1]{SalCMP} it follows that the restriction of every irreducible unitary representation $(\pi,\rho^\pi,\ccH)$ of $(G,\g g)$ to $G$ is a direct sum of finitely many irreducible unitary representations. 
Since every nilpotent Lie group is CCR \cite{Fell}, the unitary representation $(\pi,\ccH)$ is completely continuous. 
Therefore  Theorem \ref{thmCCR} applies. 
\end{proof}

Recall from \cite{nsLNM} that a Lie supergroup $(G,\g g)$ is called \emph{$\star$-reduced} if for every nonzero $x\in\g g$ there exists a unitary representation $(\pi,\rho^\pi,\ccH)$ of $(G,\g g)$ such that $\rho^\pi(x)\neq 0$.

\begin{thm}
\label{thm6.3}
Let $(G,\g g)$ be a connected Lie supergroup 
satisfying $\g g_\eev=[\g g_\ood,\g g_\ood]$.
Then the $C^*$-algebra $\cA=\cA(G,\g g)$ is CCR.
\end{thm}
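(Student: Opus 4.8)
The plan is to verify the hypothesis of Theorem~\ref{thmCCR}, namely that for every irreducible unitary representation $(\pi,\rho^\pi,\ccH)$ of $(G,\g g)$ the restriction $(\pi,\ccH)$ to $G$ is completely continuous. The engine of the argument is a positivity statement coming from the odd part. For $x\in\g g_\ood$ set $A_x:=e^{-\frac{\pi i}{4}}\rho^\pi(x)$, which is symmetric on $\ccH^\infty$ by Definition~\ref{defnungradgrad}(iv). Since $\rho^\pi$ is a representation of the Lie superalgebra $\g g$ we have $\rho^\pi([x,x])=2\rho^\pi(x)^2$, and by Definition~\ref{defnungradgrad}(iii) the left-hand side equals the skew-adjoint operator $\dd\pi([x,x])$. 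Using $\rho^\pi(x)^2=e^{\frac{\pi i}{2}}A_x^2=iA_x^2$ we obtain
\begin{equation*}
-i\,\dd\pi([x,x])=2A_x^2\geq 0\qquad\text{on }\ccH^\infty .
\end{equation*}

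The hypothesis $\g g_\eev=[\g g_\ood,\g g_\ood]$ now enters: by polarization the squares $[x,x]$ span $\g g_\eev$, so $-i\,\dd\pi$ is nonnegative on the $\Ad(G)$-invariant convex cone $W\sseq\g g_\eev$ generated by $\{[x,x]:x\in\g g_\ood\}$, and $W$ is generating. First I dispose of a degenerate case. Fix a basis $x_1,\dots,x_n$ of $\g g_\ood$ and let $\ccH_0:=\ker\bar N$ be the kernel of the closure of $N:=\sum_{i}A_{x_i}^2$, so that $v\in\ccH_0$ iff $\rho^\pi(x_i)v=0$ for all $i$. Using Definition~\ref{defnungradgrad}(v) one checks that $\ccH_0$ is a graded, $\pi(G)$-invariant subspace on which every element of $\g g$ acts by $0$: the odd generators by construction, and the even ones because $\g g_\eev=[\g g_\ood,\g g_\ood]$. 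Hence $\ccH_0$ is a sub-super-representation, and irreducibility gives $\ccH_0=0$ or $\ccH_0=\ccH$. In the latter case $\rho^\pi$ vanishes on $\g g$, so $\dd\pi(\g g_\eev)=0$ and, $G$ being connected, $\pi$ is trivial; then $\ccH$ is finite-dimensional and complete continuity is immediate.

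It remains to treat the principal case $\ccH_0=0$. Here the cone positivity exhibits $(\pi,\ccH)$ as a positive-energy (semibounded) representation of $G$ with generating positive cone $W$; passing to the quotient by the edge ideal $W\cap(-W)$, on which $\dd\pi$ vanishes by the displayed inequality, one may assume $W$ pointed, so that $(\pi,\ccH)$ is a highest-weight representation. To turn this into complete continuity I reduce to a single spectral statement. Choose a basis $X_1,\dots,X_m$ of $\g g_\eev$ and put $\Delta:=\yek-\sum_{j=1}^m\dd\pi(X_j)^2$; since each $\dd\pi(X_j)$ is skew-adjoint, $\Delta\geq\yek$ is self-adjoint and invertible. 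By \eqref{ddpiepsDrho}, for every $k\geq 1$ and every $f\in\cD(G)$ we have $\Delta^k\pi(f)=\pi\big(\sfR_{D_k}f\big)$ with $D_k:=(\yek-\sum_jX_j^2)^k\in\bfU(\g g_\eev)$, a bounded operator, and therefore $\pi(f)=\Delta^{-k}\pi(\sfR_{D_k}f)$. Consequently, once $\Delta$ is known to have compact resolvent, $\pi(f)$ is a product of a compact and a bounded operator, hence compact, for every $f\in\cD(G)$, and Theorem~\ref{thmCCR} applies.

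The main obstacle is exactly this spectral statement: showing that $\Delta$ has compact resolvent in the irreducible representation $\ccH$. Soft arguments yield only $\Delta\geq\yek$ and injectivity, whereas what is required is discreteness of the spectrum with finite multiplicities. This is the step that genuinely uses the hypothesis $\g g_\eev=[\g g_\ood,\g g_\ood]$, since it is what forces the positive cone $W$ to be generating and thereby places $(\pi,\ccH)$ in the highest-weight class; I expect to extract compact resolvent of $\Delta$ from the structure theory of unitary highest-weight representations, in which the restriction to a maximal compactly embedded subalgebra decomposes discretely with finite multiplicities.
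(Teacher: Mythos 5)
Your overall strategy (verify the hypothesis of Theorem \ref{thmCCR}) and your starting point, the positivity $-i\,\dd\pi([x,x])=2A_x^2\geq 0$ together with the fact that $\g g_\eev=[\g g_\ood,\g g_\ood]$ makes the resulting invariant cone generating, are in the same spirit as the paper: this is exactly what underlies the highest weight structure invoked there via \cite[Thm 7.3.2]{nsLNM}. But the proposal has a genuine gap precisely where you flag it, and it is not a routine technicality. Complete continuity of $\pi(f)$ requires not merely that each irreducible $G$-constituent of $\ccH$ be CCR, but that $\pi|_G$ decompose as a \emph{finite} direct sum of irreducible highest weight representations: an infinite direct sum of CCR representations need not be completely continuous (the norms of $\pi(f)$ on the summands need not tend to $0$), and a priori $\pi|_G$ might not even decompose discretely, since irreducibility is only assumed for the super-representation. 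Your substitute claim, that $\Delta=\yek-\sum_j\dd\pi(X_j)^2$ has compact resolvent, is equivalent in difficulty: the heuristic you offer (discrete decomposition with finite multiplicities under a compactly embedded Cartan subalgebra) is a property of a \emph{single} irreducible highest weight representation and presupposes exactly the finiteness you have not established. Relatedly, cone positivity alone only yields semiboundedness of the possibly highly reducible representation $\pi|_G$; calling it a highest weight representation at that stage is not justified.

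The paper closes this gap with an algebraic argument for which your proposal has no substitute. After reducing to the $\star$-reduced case (so that \cite[Thm 7.3.2]{nsLNM} applies and provides a compactly embedded Cartan subalgebra $\g t$), the space $\ccH^{[\g t]}$ of $\g t$-finite smooth vectors is an irreducible, hence cyclic, $\bfU(\g g)$-module; since $\bfU(\g g)$ is a finitely generated module over the Noetherian ring $\bfU(\g g_\eev)$, the module $\ccH^{[\g t]}$ is Noetherian over $\bfU(\g g_\eev)$ and therefore cannot contain an infinite direct sum of nonzero submodules. This forces $\pi|_G$ to be a finite direct sum of irreducible highest weight representations, each of which is CCR by \cite[Thm X.4.10]{nHolo}, and Theorem \ref{thmCCR} then applies. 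If you wish to keep the $\pi(f)=\Delta^{-k}\pi(\sfR_{D_k}f)$ device (which is a valid reduction given \eqref{ddpiepsDrho} and Nelson's theorem), you still need this finiteness, or an equivalent compactness estimate, as input; it does not follow from the cone positivity alone. Your treatment of the degenerate case $\ccH_0=\ccH$ is fine but does not replace the reduction to the $\star$-reduced situation needed to invoke the structure theorem.
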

\begin{proof}
We show that the hypotheses of Theorem \ref{thmCCR}
are satisfied.\\

\noindent\textbf{Step 1.} Let $\fn \subseteq \fg$ be the intersection of the 
kernels of all representations $\rho^\pi$, where 
$(\pi,\rho^\pi,\ccH)$ is a unitary representation of $(G,\fg)$. Then 
$\fn$ is a superideal and its even part is the Lie algebra 
of a closed normal subgroup $N \subseteq G$. Passing to 
$(G/N,\fg/\fn)$, we may therefore assume w.l.o.g.\ that $\fg$ is $\star$-reduced. 
Let $(\pi,\rho^\pi,\ccH)$ be an irreducible unitary representation of $(G,\fg)$. 

From \cite[Thm 7.3.2]{nsLNM} it follows that 
there exists a compactly embedded (in the sense of \cite[Def.\ VII.1.1]{nHolo})
Cartan subalgebra $\g t\subseteq\g g_\eev$ and a positive system $\Delta^+=\{\alpha_1,\ldots,\alpha_r\}$ of $\g t$-roots of $\g g$, such that the space $\ccH^{[\g t]}$ of $\g t$--finite smooth vectors in $\ccH$ is a dense subspace of $\ccH$. Furthermore, $\ccH^{[\g t]}$ is an irreducible $\g g$-module which is a direct sum of $\g t$-weight spaces with weights of the form \begin{equation}
\label{hwofH}
\lambda-\sum_{i=1}^rn_i\alpha_i
\text{\ where $n_i\in\N\cup\{0\}$ for every $1\leq i\leq r$.}
\end{equation}  
Since $\bfU(\g g)$ is a finitely generated $\bfU(\g g_\eev)$-module, the irreducible (hence cyclic) $\bfU(\g g)$-module $\ccH^{[\g t]}$ is a finitely generated $\bfU(\g g_\eev)$-module. 
 Since 
$\bfU(\g g_\eev)$ is a Noetherian ring
\cite[Cor. 2.3.8]{dixmier},  
it follows that $\ccH^{[\g t]}$ is a Noetherian 
$\bfU(\g g_\eev)$-module.\\

\noindent\textbf{Step 2.} We prove that 
$(\pi,\ccH)$ is a direct sum of finitely 
many irreducible unitary representations of $G$. 
Assume the contrary. 
Then we can write 
$\ccH=\bigoplus_{\ell=1}^\infty\ccH_{\ell}$ such that each 
$\ccH_{\ell}$ is a $G$-invariant subspace of $\ccH$.
From the inclusion $\bigoplus_{i=1}^\infty\ccH_{\ell}^{[\g t]}\subseteq \ccH^{[\g t]}_{}$ it follows that as a 
$\bfU(\g g_\eev)$-module,
$\ccH^{[\g t]}$ is not Noetherian. 
This contradicts Step 1.\\

\noindent\textbf{Step 3.} From \eqref{hwofH} and Step 2 it follows that $(\pi,\ccH)$ is a direct sum of finitely many irreducible highest weight unitary representations of $G$ 
(in the sense of \cite[Def.\ X.2.9]{nHolo}). From 
\cite[Thm X.4.10]{nHolo} it follows that every irreducible highest weight unitary representation of $G$ is CCR. Thus $(\pi,\ccH)$ is also a CCR unitary representation of $G$.
\end{proof}

\begin{rmk}
Let $(G,\g g)$ be a Lie supergroup such that 
$\g g$ is a real form of a classical
simple  Lie superalgebra (see \cite[Sec.\ 1.3]{musson}). That is, we assume that $\g g\otimes_\R^{}\C$ is isomorphic to one of the Lie superalgebras of type $\g{sl}(m|n)$ for $m>n\geq 0$, $\g{psl}(m|m)$ for $m\geq 1$, 
$\g{osp}(m|2n)$ for $m,n\geq 0$, $D(2,1;\alpha)$ for $\alpha\neq0,-1$, 
$\g{p}(n)$ for $n\geq 1$, $\g{q}(n)$ for $n\geq 1$, $G(3)$, or $F(4)$. 
Assume that $(G,\g g)$ has nontrivial unitary representations. (A complete list of these Lie supergroups can be obtained from \cite[Thm 6.2.1]{nsLNM}.) It is then straightforward to verify that $(G,\g g)$ satisfies the hypotheses of Theorem \ref{thm6.3}, and therefore the $C^*$-algebra $\cA=\cA(G,\g g)$ is CCR.

\end{rmk}

\end{document}